\numberwithin{equation}{section}
\newtheorem{lemma}{Lemma}[section]
\newtheorem{corollary}[lemma]{Corollary}
\newtheorem{theorem}[lemma]{Theorem}
\newtheorem{proposition}[lemma]{Proposition}
\newtheorem{remark}[lemma]{Remark}
\newtheorem{definition}[lemma]{Definition}
\newtheorem{definitions}[lemma]{Definitions}
\newtheorem{example}[lemma]{Example}
\newcommand{\MM}{{\mathbb{M}}}
\newcommand{\x}{{\mathbf{x}}}
\newcommand{\y}{{\mathbf{y}}}
\newcommand{\s}{{\mathbf{s}}}
\newcommand{\zt}{{\mathbf{t}}}
\newcommand{\MS}{{\mathbf{S}}}
 \DeclareMathOperator{\soc}{Soc}
\definecolor{turquoise2}{rgb}{0,0.898039,0.933333}
\definecolor{magenta}{rgb}{1,0,1}
\begin{document}

\subjclass[2010]{Primary 16D90, } \keywords{Leavitt path algebra, center, socle, extreme cycle, cycle, line point}

\title[Morita equivalence and Morita invariant properties. Leavitt path algebras]
{Morita equivalence and Morita invariant properties. Applications in the context of Leavitt path algebras}

\author[M. Siles ]{Mercedes Siles Molina}
\address{M. Siles Molina: Departamento de \'Algebra Geometr\'{\i}a y Topolog\'{\i}a, Fa\-cultad de Ciencias, Universidad de M\'alaga, Campus de Teatinos s/n. 29071 M\'alaga.   Spain.}
\email{msilesm@uma.es}

\author[J. F. Solanilla]{Jos\'e F. Solanilla Hern\'andez}
\address{J. F. Solanilla Hern\'andez:  Centro Regional Universitario de Cocl\'e: \lq\lq Dr. 
Bernardo Lombardo\rq\rq,  Universidad de  Panam\'a. Apartado Postal 0229. Penonom\'e, 
Provincia de Cocl\'e. Panam\'a.}
\email{jose.solanilla@up.ac.pa}


\begin{abstract}
In this paper we prove that two idempotent rings are Morita equivalent if every corner of one of them is
isomorphic to a corner of a matrix ring of the other one. We establish the converse (which is not true in general)
for $\sigma$-unital rings having a $\sigma$-unit consisting of von Neumann regular elements. The following
aim is to show that a property is Morita invariant if it is invariant under taking corners and under taking matrices. 
The previous results are used to check the Morita invariance of certain ring properties (being locally left/right artinian/noetherian, being categorically left/right artinian, being an $I_0$-ring and being properly purely infinite)
and certain graph properties in the context of Leavitt path algebras (Condition (L), Condition (K) and cofinality). 
A different proof of the fact that a graph with an infinite emitter does not admit any desingularization is also
given.

\end{abstract}

\maketitle


\section*{introduction}
In the study of the structure of unital rings there are several results as powerful as the Wedderburn-Artin 
Theorem, or even more. One of them are Morita's theorems on the equivalence of module categories, that
can be regarded as a generalization of it.  Important consequences are the result that says that two rings $R$
and $S$ are Morita equivalent if and only if $S$ is isomorphic to a corner $e\MM_n(R)e$ for some full idempotent
$e$ in some matrix ring $\MM_n(R)$ and the following corollary: a ring-theoretic property $P$ is Morita invariant
if and only if whenever a ring $R$ satisfies $P$ so does $eRe$, for any full idempotent $e$ and $\MM_n(R)$ for
any $n\geq 2$.

Our aim in this paper has been to study these results for rings without identity. Concretely, we prove that
if two idempotent rings $R$ and $S$ are Morita equivalent, then every corner of $S$ is isomorphic to a corner
of some matrix ring  $\MM_n(R)$, and similarly for every corner of $R$ (Theorem \ref{equivlocal}). We call this
property ``being local matricial isomorphic". While in the unital context if $R$ is local matricial isomorphic to $S$
then $S$ is local matricial isomorphic to $R$, this is not the case under the absence of the unity (see Example
\ref{Nocierto}). We also show the  converse of Theorem \ref{equivlocal} for $\sigma$-unital rings whose $\sigma$-unit
consists of von Neumann regular elements (Theorem \ref{reciproco}). This result is not true in general (see
Example \ref{Noengeneral}). In order to skip the lack of a unit element in our results we need to use the notion of local algebra at
an element. These algebras will also allow to consider von Neumann regular elements in  Theorem \ref{reciproco},
as explained. This is what we do in Section 1.

Section 2 is devoted to settle the precise machinery to test if a property for rings is Morita invariant. We prove
(Theorem \ref{moritainvariant}) that for idempotent rings a property $P$ is Morita invariant if it is stable under
corners and under taking matrices. This result is also true if we change idempotent rings by any subclass of 
idempotent rings. It is used in the section to show the Morita invariance of properties such as
being locally left/right artinian/noetherian, being categorically left/right artinian, being an $I_0$-ring and being properly purely infinite (Theorems \ref{locartinian}, \ref{catartinian}, \ref{I0Mi} and \ref{ppiMI}).

The last section of the article is developed in the context of Leavitt path algebras. We use the results in Section 2
with the aim of getting the Morita invariance of properties related to the underlying graphs of the Leavitt path algebras, 
and show that Condition (L), Condition (K) and cofinality are Morita invariant properties (Theorems \ref{LandK} and \ref{cofinMI}). Finally, we apply the results in Section 1 to prove, following a different approach, the main result in \cite{AR}: a graph which contains an infinite emitter does not admit any desingularization (Theorem \ref{desing}).


\section{Morita equivalence for rings without a unit element. Local rings at elements}

It is well-known that corners of rings, i.e., subrings of a ring $R$ of the form $eRe$, where $e$ is an idempotent of $R$, play a fundamental role in the theory of Morita equivalence for unital rings. For an arbitrary ring, the lack of a unit element can be overcome by using local rings at elements.

\begin{definition}{\rm 
Let $R$ be a ring and let $a\in R$. The \emph{local ring} of $R$ \emph{at} $a$ is defined as $R_a:= aRa$ with sum inherited from $R$ and product given by: $axa \cdot aya = axaya$.}
\end{definition} 

This associative notion was introduced in \cite{FGGS}. The reader is refered to \cite{GS} for a fuller account on transfer of various properties between rings and their local rings at elements. Notice that if $e$ is an idempotent in a ring $R$, then the local ring of $R$ at $e$ is just the corner $R_e$.
\medskip

The other notion that will be of use in order to establish our main results in this section is that of Morita equivalence for idempotent rings. We recall here some of its main aspects. 
\medskip

Let $R$ and $S$ be two rings, $_R N_S$ and $_S M_R$ two bimodules
and $(-, -)\colon N \times M \to R$, $[  -, -]\colon M \times N \to S$ two
maps. Then the following conditions are equivalent:
\begin{enumerate}
\item[(i)] $\left(\begin{matrix}
 R & N \cr M & S\end{matrix}\right)$
  is a ring with componentwise sum and product given by:
\[
 \left(\begin{matrix}r_1 & n_1 \cr m_1 & s_1\end{matrix}\right)
          \left(\begin{matrix} r_2 & n_2 \cr m_2 & s_2\end{matrix}\right) =
         \left(\begin{matrix}
         r_1r_2 +(n_1, m_2) & r_1n_2 +n_1s_2 \cr
          m_1r_2 +s_1m_2 & [  m_1, n_2] +s_1s_2
          \end{matrix}\right)
\]
\item[(ii)] $[ -, -]$ is $S$-bilinear and $R$-balanced,  $( -, -)$
is $R$-bilinear and $S$-balanced and the following associativity
conditions hold:
\[
(n, m)n^{\prime} = n [m, n^{\prime}] \quad \hbox{and} \quad
            [m, n] m^{\prime} = m (n, m^{\prime})\,,
\]
for all $m$, $m'\in M$ and $n$, $n'\in N$.

That $[ -, -]$ is $S$-bilinear and $R$-balanced and that $( -, -)$ is $R$-bilinear and
$S$-balanced is equivalent to having bimodule maps $\varphi \colon N \otimes_S M \to R$ and  $\psi\colon M \otimes_R N \to S$, given by
\[
\varphi (n \otimes m) = (n, m) \quad \hbox{and} \quad \psi (m \otimes n) = [m, n]
\]
so that the associativity conditions above read
\[
\varphi (n \otimes m) n^\prime= n \psi (m \otimes n^\prime) \quad \hbox{and} \quad
       \psi (m \otimes n) m^\prime = m \varphi (n\otimes m^\prime)\,.
\]
\end{enumerate}

A \emph{Morita context} is a sextuple $(R, S, N, M, \varphi,
\psi)$ satisfying one of the (equivalent) conditions given above. The associated ring (in condition (i))
is called the \emph{Morita ring of the context}. By abuse  of
notation we will write $(R, S, N, M)$ instead  of $(R, S, N, M,
\varphi, \psi)$ and  will identify $R$, $S$, $N$ and $M$ with their natural images in
the Morita ring associated to the context. The Morita context is said to be \emph{surjective} if the maps $\varphi$ and $\psi$ are
both surjective.

In classical Morita theory, it is shown that two rings with
identity $R$ and $S$ are Morita equivalent (i.e., $R$-Mod and
$S$-Mod are equivalent categories) if and only if there exists a
surjective Morita context $(R, S, N, M, \varphi, \psi)$. The
approach to  Morita theory for rings without identity by means of
Morita contexts appears in a number of papers (see~\cite{GarciSimon} and the references therein) in which many
consequences are obtained from the existence of a surjective Morita context
for two rings $R$ and $S$.

For an idempotent ring $R$ we denote by $R$-Mod the full
subcategory of the category of all left $R$-modules whose objects
are the ``unital" nondegenerate modules. Here, a left $R$-module
$M$ is said to be \emph{unital} if $M=RM$, and $M$ is said to be
\emph{nondegenerate} if, for $m\in M$, $Rm=0$ implies $m=0$. Note
that, if $R$ has an identity, then $R$-Mod is the usual category
of left $R$-modules.

It is shown in~\cite[Theorem]{Ky} that, if $R$ and $S$ are
arbitrary rings having a surjective Morita context, then the
categories $R$-Mod and $S$-Mod are equivalent. The converse direction is proved in~\cite[Proposition 2.3]{GarciSimon} for idempotent rings, yielding the theorem that follows. Recall that a ring $R$ is said to be \emph{idempotent} if $R^2=R$.

\begin{theorem} Let $R$ and $S$ be two idempotent rings. Then the categories $R$-Mod and
$S$-Mod are equivalent if and only if there exists a surjective
Morita context $(R, S, N, M)$.
\end{theorem}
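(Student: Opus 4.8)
The plan is to establish each implication separately, since both directions are already available in the literature and the statement is essentially their assembly.

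For the ``if'' direction, suppose a surjective Morita context $(R, S, N, M)$ exists. This hypothesis makes no use of idempotency, and \cite[Theorem]{Ky} establishes directly that for arbitrary rings the existence of a surjective Morita context yields an equivalence between $R$-Mod and $S$-Mod. Hence this half is immediate, and in fact holds without assuming $R^2=R$ or $S^2=S$.

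For the ``only if'' direction, suppose $R$-Mod and $S$-Mod are equivalent. Here idempotency enters in an essential way: the objects of $R$-Mod are required to be unital and nondegenerate, and $R$ itself qualifies as an object of $R$-Mod precisely because $R^2=R$. One then reconstructs the Morita context from an equivalence $F$ from $R$-Mod to $S$-Mod, with quasi-inverse $G$, following the classical recipe: the bimodules $N$ and $M$ and the pairings $\varphi$, $\psi$ are built out of the values of $F$ and $G$ on $R$ and $S$ together with the unit and counit of the adjunction, after which one verifies the associativity conditions of condition (ii). This is exactly what is carried out in \cite[Proposition 2.3]{GarciSimon}, which one invokes.

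The main obstacle---and the reason the statement is restricted to idempotent rings---is the surjectivity of $\varphi$ and $\psi$ in the reconstructed context. Without idempotency the natural images of $R$ and $S$ inside the Morita ring need not generate it, so the recovered context can fail to be surjective; the unitality and nondegeneracy conditions defining $R$-Mod are precisely what force surjectivity once $R^2=R$ holds. I expect verifying this surjectivity to be the crux of the converse, and it is the step where \cite[Proposition 2.3]{GarciSimon} does the real work.
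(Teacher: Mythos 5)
Your proposal is correct and follows essentially the same route as the paper: the ``if'' direction is exactly the citation of \cite[Theorem]{Ky} (valid for arbitrary rings), and the ``only if'' direction is exactly the appeal to \cite[Proposition 2.3]{GarciSimon} for idempotent rings, which is all the paper itself does. Your additional commentary on how the context is reconstructed and why idempotency is needed for surjectivity is reasonable but not part of the argument either you or the paper actually carries out, since both defer that work to the cited reference.
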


Given two idempotent rings $R$ and $S$, we will say that they are
\emph{Morita equivalent} if the categories $R$-Mod and $S$-Mod are equivalent.
In what follows, and in order to ease the notation, we will use juxtaposition instead of the tensor product terminology. 
\medskip

Recall that an element $a$ in a ring $R$ is said to be \emph{von Neumann regular} if there exists $b\in R$ such that
$a=aba$. The ring $R$ will be called a  \emph{von Neumann regular ring} if every element in $R$ is von Neumann regular.
\medskip

\begin{definitions}\label{isomloc}{\rm
Let $R$ and $S$ be two rings. 

We will say that $R$ is {\it  local matricial isomorphic to} $S$ if for every $a$ in $R$
which is von Neumann regular (in $R$) there exist $n=n(a)\in \mathbb{N}$ and $u=u^2\in \mathbb{M}_n(S)$ such that the rings $R_a$ and $\mathbb{M}_n(S)_u$ are isomorphic.
\medskip

We will say that $R$ is {\it  corner matricial isomorphic to} $S$ if for every idempotent 
$e$ in $R$ there exist $n=n(e)\in \mathbb{N}$ and $u=u^2\in \mathbb{M}_n(S)$ such that the rings $R_e$ and $\mathbb{M}_n(S)_u$ are isomorphic.
}
\end{definitions}
\medskip

Clearly being local matricial isomorphic implies being corner matricial isomorphic. The converse will also be true by means of the following easy result.

\begin{lemma}\label{isolocales}
Let $a$ be a von Neumann regular element in a ring $R$, and suppose $b\in R$ such that $aba=a$ and  $bab=b$. Denote by $e$ the idempotent $ab$. Then the algebras $R_a$ and $R_e$ are isomorphic. 
\end{lemma}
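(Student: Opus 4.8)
The plan is to exhibit an explicit isomorphism between $R_a=aRa$, equipped with the twisted product $axa\cdot aya = axaya$, and the corner $R_e=eRe$, using right multiplication by $b$ in one direction and by $a$ in the other. First I would record the two facts I will lean on throughout. From $bab=b$ we get $e^2=abab=a(bab)=ab=e$, so $e$ is indeed idempotent, and by the observation that the local ring at an idempotent is just its corner, $R_e$ carries the ordinary product of $R$. From $aba=a$ and $bab=b$ we get the ``cancellation'' identities $ayab\cdot a = ay(aba) = aya$ and $aya b\cdot ab = aya(bab)=ayab$ that will drive all the computations.

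Next I would define $\varphi\colon R_a\to R_e$ by $\varphi(s)=sb$ and check it is well defined, i.e. that $\varphi(aya)=ayab$ genuinely lies in $eRe$. This is the verification $e(ayab)e = ayab$, which unwinds to $ab\cdot ayab\cdot ab = ayab$ using $aba=a$ in the left factor and $bab=b$ in the right one. I would then verify that $\varphi$ is a ring homomorphism: additivity is clear, and for multiplicativity I would compute, on the one hand, $\varphi(aya\cdot aza)=\varphi(ayaza)=ayazab$, and on the other $\varphi(aya)\varphi(aza)=ayab\cdot azab = ay(aba)zab = ayazab$, the two agreeing precisely because of $aba=a$. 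Here it is essential that the product on the right-hand side is the ordinary product of $R$, which is legitimate exactly because $R_e$ is a corner.

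To finish I would produce the candidate inverse $\psi\colon R_e\to R_a$, $\psi(w)=wa$, first checking it lands in $aRa$ (for $w=abrab$ one has $wa=abra=a(br)a\in aRa$, using $aba=a$), and then verifying the two composites: $\psi(\varphi(s)) = \varphi(s)a = ayab\cdot a = aya = s$ for $s=aya\in R_a$, and $\varphi(\psi(w)) = (wa)b = w(ab) = we = w$ for $w\in R_e$, since $e$ acts as a two-sided identity on $eRe$. As $\varphi$ is then a bijective ring homomorphism, it is an isomorphism, giving $R_a\cong R_e$.

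The computations are all short, so there is no genuine obstacle beyond bookkeeping; the one point that requires care --- and the only place where the hypothesis $bab=b$ (as opposed to merely $aba=a$) is needed --- is confirming that $\varphi$ takes values in $R_e$, i.e. that $ayab$ is fixed by multiplication by $e=ab$ on both sides. The twin conditions $aba=a$ and $bab=b$ are exactly what make the pair ``$\cdot\, b$'' and ``$\cdot\, a$'' mutually inverse at the level of $aRa$ and $eRe$.
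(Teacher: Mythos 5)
Your proof is correct and follows essentially the same route as the paper: the paper's (one-line) proof uses exactly the map $ara\mapsto arab=ab(ar)ab$, i.e.\ right multiplication by $b$, which is your $\varphi$. You simply supply the verifications the paper leaves to the reader, including the explicit inverse $w\mapsto wa$, and all of your computations check out.
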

\begin {proof}
It is not difficult to see that the map $\varphi: R_a\to R_e$ given by $ara\mapsto arab=ab(ar)ab$ is a ring isomorphism.
\end{proof}

\begin{corollary}\label{localmatricialEScornermatricial}
Two rings $R$ and $S$ are local matricial isomorphic if and only if they are corner matricial isomorphic.
\end{corollary}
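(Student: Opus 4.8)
The plan is to prove the two implications separately, with the forward direction being the ``clearly'' remark already recorded in the text and the backward direction carrying all the content.

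For the implication asserting that local matricial isomorphism implies corner matricial isomorphism, I would simply observe that every idempotent is von Neumann regular: if $e=e^2$, then $e=e\cdot e\cdot e$, so $e$ qualifies as a von Neumann regular element with $b=e$. Hence, assuming $R$ is local matricial isomorphic to $S$ and taking an arbitrary idempotent $e\in R$, the defining property applied to the von Neumann regular element $e$ produces $n\in\mathbb{N}$ and $u=u^2\in\mathbb{M}_n(S)$ with $R_e\cong\mathbb{M}_n(S)_u$, which is exactly the condition required for corner matricial isomorphism.

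For the converse, I would start with an arbitrary von Neumann regular element $a\in R$, so that $a=aba$ for some $b\in R$, and aim to replace $a$ by an idempotent and then invoke Lemma \ref{isolocales}. The key point is that this lemma requires a \emph{reflexive} inverse, i.e. both $aba=a$ and $bab=b$, so I would first normalize $b$ by setting $b':=bab$; a short computation gives $ab'a=a$ and $b'ab'=b'$. Writing $e:=ab'$, one checks $e^2=e$, and Lemma \ref{isolocales} then yields an isomorphism $R_a\cong R_e$. Since $R$ is corner matricial isomorphic to $S$, applying that hypothesis to the idempotent $e$ furnishes $n\in\mathbb{N}$ and $u=u^2\in\mathbb{M}_n(S)$ with $R_e\cong\mathbb{M}_n(S)_u$; composing the two isomorphisms gives $R_a\cong\mathbb{M}_n(S)_u$, which establishes that $R$ is local matricial isomorphic to $S$.

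The only step requiring any care, and the ``obstacle'' if one can call it that, is the normalization of the von Neumann inverse to a reflexive one: without replacing $b$ by $bab$, the product $ab$ need not be an idempotent to which Lemma \ref{isolocales} applies verbatim. Once that adjustment is made, the argument is a routine chaining of ring isomorphisms.
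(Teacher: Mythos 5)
Your proof is correct and follows essentially the same route as the paper: the forward direction is immediate because every idempotent is von Neumann regular, and the converse reduces local algebras at von Neumann regular elements to corners via Lemma \ref{isolocales}. Your explicit normalization $b':=bab$ to obtain a reflexive inverse is a detail the paper leaves implicit (its Lemma \ref{isolocales} simply assumes $aba=a$ and $bab=b$), so making it explicit is a welcome clarification rather than a deviation.
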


On the other hand, the notion of being local matricial isomorphic is not symmetric, in the sense that there exist rings $R$ and $S$ such that $R$ is local matricial isomorphic to $S$ but $S$ is not local matricial isomorphic to $R$. See the example that follows.
\medskip

For an arbitrary ring $R$, let ${\rm FM}(R)$ be the set of infinite (countable) matrices over $R$ such that all their entries are zero except at most a finite number of them, and denote by ${\rm RCFM}(R)$ the infinite (countable) matrices over $R$ such that every row and every column has all their entries equal zero except at most a finite number of them. 

\begin{example}\label{Nocierto} {\rm Let $K$ denote a field.
The ring ${\rm FM}(K)$ is local matricial isomorphic to ${\rm RCFM}(K)$ but ${\rm RCFM}(K)$ is not local matricial isomorphic to ${\rm FM}(K)$.
Indeed, note first that $R:={\rm FM}(K)$ is the socle of $S:={\rm RCFM}(K)$, which is an ideal of $S$, hence for every $a\in R$ (which is von Neumann regular because every element in the socle of a ring is) the algebra $R_a$ coincides with $S_a$. This means that $R$ is local isomorphic to $S$. However, for $1$ the unit element of $S$, the algebra $S_1=S$ is not isomorphic to  ${\mathbb M}_n(K)_x$ for any natural number $n$ and any $x \in {\mathbb M}_n(K)$; the reason is that ${\mathbb M}_n(K)_x$ coincides with its socle but not so the ring $S$.
}
\end{example}

Theorem \ref{equivlocal}  extends the  well-known result that asserts that  given two unital rings $R$ and $S$ which are Morita equivalent then  $R$ is isomorphic to a corner of some ring of matrices of size $n\times n$ over $S$. 
\medskip

\begin{theorem}\label{equivlocal}
Let $R$ and $S$ be two Morita equivalent idempotent rings. Then $R$ is local matricial isomorphic $S$ and $S$ is local matricial isomorphic to $R$. 
\end{theorem}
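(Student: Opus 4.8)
The plan is to invoke Corollary \ref{localmatricialEScornermatricial} at the outset, which reduces the assertion ``$R$ is local matricial isomorphic to $S$'' to the weaker-looking ``$R$ is corner matricial isomorphic to $S$''. Thus it suffices to show that for every idempotent $e=e^2\in R$ there are $n\in\mathbb{N}$ and an idempotent $u\in\mathbb{M}_n(S)$ with $R_e\cong \mathbb{M}_n(S)_u$. Since $R$ and $S$ are Morita equivalent idempotent rings, the characterization recalled above furnishes a surjective Morita context $(R,S,N,M,\varphi,\psi)$, and the whole construction will run on the surjectivity of $\varphi\colon NM\to R$ together with the two associativity identities $n[m,n']=(n,m)n'$ and $[m,n]m'=m(n,m')$.

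First I would fix $e=e^2\in R$ and use surjectivity of $\varphi$ to write $e=\sum_{i=1}^{n}(n_i,m_i)$ with $n_i\in N$, $m_i\in M$. The crucial preparatory step is a \emph{normalization}: replacing each $n_i$ by $en_i$ and each $m_i$ by $m_ie$, and using that $\varphi$ is an $R$-bimodule map, one still has $e=\sum_i(n_i,m_i)$ but now in addition $en_i=n_i$ and $m_ie=m_i$ for all $i$. I would then set $u=([m_i,n_j])_{i,j}\in\mathbb{M}_n(S)$. A short computation, pulling the $S$-scalar $[m_j,n_k]$ inside via $S$-linearity of $[-,-]$, applying $n_j[m_j,n_k]=(n_j,m_j)n_k$, summing, and using $\sum_j(n_j,m_j)=e$ with $en_k=n_k$, yields $(u^2)_{ik}=[m_i,en_k]=[m_i,n_k]=u_{ik}$, so $u$ is idempotent. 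This is the one place where the normalization is indispensable.

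Next I would define $\Phi\colon R_e\to\mathbb{M}_n(S)_u$ by $\Phi(x)=([m_ix,n_j])_{i,j}$, which is well defined because $R$-balancedness gives $[m_ix,n_j]=[m_i,xn_j]$. The same manipulations as for $u^2$, now carrying the extra factor $x$ and using $ex=xe=x$ for $x\in R_e$, show that $u\Phi(x)=\Phi(x)=\Phi(x)u$, so $\Phi$ genuinely lands in $u\mathbb{M}_n(S)u=\mathbb{M}_n(S)_u$, and that $\Phi(xy)=\Phi(x)\Phi(y)$; additivity is clear, so $\Phi$ is a ring homomorphism.

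It then remains to prove bijectivity. For injectivity I would establish the identity $\sum_i n_i[m_ix,n_j]=xn_j$ (again via $n[m,n']=(n,m)n'$ and $\sum_i(n_i,m_i)=e$): if $\Phi(x)=0$ then all $[m_ix,n_j]$ vanish, forcing $xn_j=0$ for every $j$, whence $x=xe=\sum_j(xn_j,m_j)=0$. For surjectivity, given $\xi=(s_{ij})$ with $u\xi u=\xi$, I would put $x=\sum_{i,j}(n_is_{ij},m_j)$, check $x\in R_e$ using $en_i=n_i$ and $m_je=m_j$, and compute $\Phi(x)_{kl}=\sum_{i,j}u_{ki}s_{ij}u_{jl}=(u\xi u)_{kl}=\xi_{kl}$, so $\Phi(x)=\xi$. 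Hence $R_e\cong\mathbb{M}_n(S)_u$, giving that $R$ is corner matricial, and so local matricial, isomorphic to $S$. The statement for $S$ follows by symmetry, applying the identical argument to the symmetric context $(S,R,M,N,\psi,\varphi)$. I expect the main obstacle to be precisely the normalization $en_i=n_i$, $m_ie=m_i$, since without it $u$ is not idempotent and the construction collapses; the rest is careful bookkeeping with the two associativity identities.
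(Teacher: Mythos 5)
Your proof is correct: the normalization $en_i=n_i$, $m_ie=m_i$ is legitimate (apply $R$-bilinearity of $(-,-)$ to $e=e^3$), it does make the Gram matrix $u=([m_i,n_j])_{i,j}$ idempotent, and the map $\Phi(x)=([m_ix,n_j])_{i,j}$ is then a well-defined ring isomorphism of $R_e$ onto $\MM_n(S)_u$ by exactly the identities you invoke; symmetry of the context $(S,R,M,N,\psi,\varphi)$ finishes the argument. The global architecture is the same as the paper's (reduction to corners via Corollary \ref{localmatricialEScornermatricial}, a surjective Morita context, an explicit idempotent in a matrix ring over $S$, an explicit isomorphism, then symmetry), but the key construction differs. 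The paper writes $e=\x\s\y=\sum_i x_is_iy_i$ with a diagonal matrix $\s$ over $S$ sandwiched in the middle --- a decomposition that implicitly uses that the bimodule $N$ can be taken unital over $S$ (so that each $n_i$ rewrites as a sum of elements $xs$) --- and then, since $\y\x\s$ need not be idempotent, takes as its idempotent the square $u=\y\x\s\y\x\s$, with isomorphism $ere\mapsto \y(ere)\x\s$. That squaring trick plays precisely the role your normalization plays: both are devices to compensate for the absence of a unit. Your route needs only the raw surjectivity of $\varphi$ to write $e=\sum_i(n_i,m_i)$, at the cost of the normalization step; the paper's route avoids normalization at the cost of the (unstated) middle factor from $S$. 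One small quibble with your commentary rather than your mathematics: the normalization is not needed only for $u^2=u$; you also use $en_i=n_i$ and $m_je=m_j$ in the surjectivity step to verify $x=\sum_{i,j}(n_is_{ij},m_j)\in R_e$. Your argument is, if anything, closer to the classical unital proof, of which Theorem \ref{equivlocal} is the announced generalization.
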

\begin{proof} By Corollary \ref{localmatricialEScornermatricial} it suffices to show that $R$ and $S$ are corner matricial each other.
\medskip
Let $(R, S, N, M)$ be a surjective Morita context for the rings $R$ and $S$, and let $e$ be an idempotent of $R$. Write $e=\sum_{i=1}^n x_is_iy_i$, with $x_i\in N$,  $s_i\in S$ and $y_i\in M$. If we denote $\x=(x_1, \dots, x_n)$, $\s=diag (s_1, \dots, s_n) \in \MM_n(S)$, $ \y=(y_1, \dots, y_n)^t$ and $\MS=\MM_n(S)$, then
$e= \x\s\y$ and the element $u:=\y\x\s\y\x\s$ is an idempotent in $\MS$ as $u^2=\y\x\s\y\x\s\y\x\s\y\x\s=\y(\x\s\y\x\s\y\x\s\y)\x\s=\y e \x\s=u$.

Now, define:

$$\begin{matrix}
\varphi: & eRe & \to & u\MS u \\
& ere & \mapsto & \y(ere)\x\s
\end{matrix}$$

The map  $\varphi$ is well defined because for every $r\in R$, 
$\y(ere)\x\s= \y\x\s\y \x\s\y r x\s\y \x\s\y\x\s= $ $(\y\x\s\y \x\s)\y r x\s(\y \x\s\y\x\s)=$ $ u(\y r x\s)u\in u\MS u$. It is a ring homomorphism as for $\alpha, \beta \in eRe$, $\varphi(\alpha \beta)= \y\alpha \beta\x\s= \y \alpha e\beta\x\s= \y \alpha \x\s\y \beta \x\s = (\y \alpha \x\s) (\y \beta \x\s) =\varphi (\alpha)\varphi(\beta)$. Injectivity: $\varphi(ere)=0$ implies $\y ere\x\s=0$, therefore   $ere=\x\s\y ere\x\s\y =0$. And the surjectivity follows because for $uz u\in u \MS u$, $uzu=\y\x\s \y\x\s z \y\x\s\y\x\s= \y(\x\s\y\x\s z \y\x\s\y)\x\s=\varphi (e\x\s z \y e).$

Finally, note that if we change the roles of $R$ and $S$ we obtain that $S$ is local matricial isomorphic $R$.
\end{proof}
\medskip

An immediate consequence is the following result, which is very well-known for unital rings.

\begin{corollary}\label{casoCorner}
Let $R$ and $S$ be two idempotent rings which are Morita equivalent. Then for every idempotent $e\in R$ there exist a positive integer $n\in \mathbb{N}$ and an idempotent $u\in \MM_n(S)$ such that
$eRe \cong \MM_n(S)_u$. In particular, if $R$ is a unital ring, then $R\cong \MM_m(S)_v$ for a convenient $m\in \mathbb{N}$ and  an idempotent $v\in \MM_m(S)$.
\end{corollary}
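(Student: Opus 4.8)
The plan is to read this corollary off directly from Theorem \ref{equivlocal} together with Corollary \ref{localmatricialEScornermatricial}, since the statement is essentially the unfolding of what ``corner matricial isomorphic'' means, specialized to the situation of a Morita equivalence. No new construction is needed; the real work has already been done in Theorem \ref{equivlocal}, whose proof produces the explicit isomorphism $\varphi\colon eRe\to u\MM_n(S)u$ out of a surjective Morita context.

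First I would invoke Theorem \ref{equivlocal}: because $R$ and $S$ are Morita equivalent idempotent rings, $R$ is local matricial isomorphic to $S$. By Corollary \ref{localmatricialEScornermatricial}, being local matricial isomorphic is equivalent to being corner matricial isomorphic, so $R$ is corner matricial isomorphic to $S$. I would then simply unfold Definition \ref{isomloc}: for the given idempotent $e\in R$ there exist $n=n(e)\in\mathbb{N}$ and an idempotent $u=u^2\in\MM_n(S)$ with $R_e\cong \MM_n(S)_u$. Since $e$ is idempotent, the local ring $R_e$ is precisely the corner $eRe$ (as remarked just after the definition of the local ring at an element), while $\MM_n(S)_u=u\MM_n(S)u$ by definition. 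This yields $eRe\cong\MM_n(S)_u$, which is the first assertion. Note that only the direction ``$R$ is local matricial isomorphic to $S$'' of Theorem \ref{equivlocal} is used here, not the symmetric counterpart.

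For the ``in particular'' part I would take $e=1_R$. A unital ring is idempotent, since every $r$ equals $1_R\, r\in R^2$, so Theorem \ref{equivlocal} does apply and $e=1_R$ is a legitimate idempotent of $R$. Applying the first assertion to this choice of $e$ gives $R=1_R R\, 1_R=eRe\cong\MM_m(S)_v$ for a convenient $m\in\mathbb{N}$ and some idempotent $v\in\MM_m(S)$, as desired.

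The argument is thus mostly bookkeeping, and I do not anticipate a genuine obstacle. The only points requiring a moment's care are recognizing that for an idempotent $e$ the local ring $R_e$ coincides with the corner $eRe$ (so that Definition \ref{isomloc} literally delivers $eRe\cong\MM_n(S)_u$), and observing that the unital statement is nothing more than the instance $e=1_R$ of the general one, together with the remark that unitality forces idempotence so that the hypotheses of Theorem \ref{equivlocal} are met.
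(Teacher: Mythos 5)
Your proposal is correct and is exactly the derivation the paper intends: the corollary is stated there as an immediate consequence of Theorem \ref{equivlocal}, unfolded via Corollary \ref{localmatricialEScornermatricial} (or, just as well, via the observation that an idempotent is von Neumann regular) together with the identification $R_e=eRe$, and your treatment of the unital case by taking $e=1_R$ matches the intended reading. No gaps.
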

 
 \begin{definitions}{\rm
 Consider a class  $\mathcal C$ of rings and let $P$ be a property of rings.

We will say that property $P$  is \emph{stable by corners}  in $\mathcal C$ if for any ring $R$ in the class $\mathcal C$ we have that $R$ satisfies property $P$ if and only if every corner $R_e$, where $e=e^2\in R$, satisfies this property.

We will say that $P$ is \emph{stable by local algebras at von Neumann regular elements}   in $\mathcal C$ if for every ring $R$  in $\mathcal C$, the ring $R$ satisfies $P$ if and only if every local algebra $R_a$ at a von Neumann regular element $a\in R$ satisfies $P$.

We will say that $P$ is \emph{stable by local algebras at elements}  in $\mathcal C$ if for any ring $R$  belonging to $\mathcal C$ we have that $R$ satisfies property $P$ if and only if every local algebra $R_a$ at an element $a\in R$ satisfies this property. 
}
 \end{definitions}
 
 \begin{lemma}\label{stabilityBYcornerISstabilityBYlocalatvNr}
A property $P$ is stable by corners if and only if it is stable by local algebras at von Neumann regular elements.
 \end{lemma}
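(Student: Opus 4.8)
The plan is to reduce both implications to Lemma \ref{isolocales}, which identifies a local algebra at a von Neumann regular element with a corner. Two elementary observations drive the argument. First, every idempotent $e\in R$ is von Neumann regular (since $e=e\cdot e\cdot e$) and, as noted after the definition of local ring, the local algebra $R_e$ is literally the corner $R_e$; thus the corners of $R$ form a subfamily of its local algebras at von Neumann regular elements. Second, conversely, every such local algebra is isomorphic to a corner: given von Neumann regular $a$ with $a=aca$, the element $b:=cac$ satisfies $aba=a$ and $bab=b$, so $e:=ab$ is an idempotent and Lemma \ref{isolocales} gives a ring isomorphism $R_a\cong R_e$. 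Throughout I use the standing convention that the property $P$ is preserved under ring isomorphisms, which is what allows $P$ to be transported across $R_a\cong R_e$.

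First I would prove that stability by local algebras at von Neumann regular elements implies stability by corners. Fix $R\in\mathcal C$. If $R$ satisfies $P$, then every $R_a$ with $a$ von Neumann regular satisfies $P$; in particular every corner $R_e$ does, since idempotents are von Neumann regular. Conversely, suppose every corner $R_e$ satisfies $P$. For an arbitrary von Neumann regular $a$, the isomorphism $R_a\cong R_e$ above transports $P$ from $R_e$ to $R_a$, so every $R_a$ satisfies $P$; stability by local algebras at von Neumann regular elements then forces $R$ to satisfy $P$. This establishes the biconditional defining stability by corners for $R$.

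The reverse implication is symmetric. Fix $R\in\mathcal C$ and assume $P$ is stable by corners. If $R$ satisfies $P$, then every corner $R_e$ satisfies $P$, and for von Neumann regular $a$ the isomorphism $R_a\cong R_e$ shows $R_a$ satisfies $P$. Conversely, if every local algebra $R_a$ at a von Neumann regular element satisfies $P$, then in particular every corner $R_e$ satisfies $P$ (each idempotent being von Neumann regular), whence $R$ satisfies $P$ by stability by corners.

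I do not anticipate a genuine obstacle: the content is carried entirely by Lemma \ref{isolocales} together with the fact that idempotents are von Neumann regular. The only points requiring care are bookkeeping ones. One is to make explicit the standing assumption that $P$ is isomorphism-invariant, since this is precisely what lets me move $P$ across $R_a\cong R_e$. The other is to observe that the definitions of both stability notions quantify over all corners, respectively all local algebras at von Neumann regular elements, of a fixed $R\in\mathcal C$, without demanding that those corners or local algebras themselves lie in $\mathcal C$; hence no closure hypothesis on the class $\mathcal C$ is needed, and the equivalence holds for any such class.
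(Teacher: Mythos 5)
Your proof is correct and follows essentially the same route as the paper, which simply invokes Lemma \ref{isolocales} (every local algebra at a von Neumann regular element is isomorphic to a corner) together with the observation that idempotents are von Neumann regular. Your version merely spells out the bookkeeping the paper leaves implicit, including the useful detail that $b:=cac$ supplies the element required in Lemma \ref{isolocales}.
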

 \begin{proof}  Since every local algebra at a von Neumann regular element is isomorphic to a corner the result follows by Lemma \ref{isolocales}.
\end{proof}
 
 Recall that a ring $R$ is said to be \emph{semiprime} if it has no nonzero ideals of zero square.
\medskip

  \begin{example}\label{EjemploZocalo}
 {\rm{ \bf (Socle example.)}
The property ``coincidence with the socle" is stable by local algebras at elements for semiprime rings.  We prove this statement. Suppose first that a semiprime ring coincides with its socle. Then every local algebra at an element (which is also semiprime by  \cite[Proposition 2.1 (i)]{GS}) coincides with its socle; the reason is \cite[Proposition 2.1 (v)]{GS}. This same reference implies that if for a semiprime ring every local algebra at an element coincides with its socle, then the ring itself coincides with its socle.

However, the property ``coincidence with the socle" is not stable by corners for semiprime rings. For an example of this fact, let $L$ be a simple domain without identity (such a ring does exist; see \cite[Lemma of Exercise 12.2]{Lex}). Then $L$ is semiprime, as it is a domain; the socle is zero because if $a$ were a nonzero element in the socle of $L$ then $a=aba$ for some $b\in L$, hence $ax=abax$ for every $x\in L$; applying that $L$ is a domain we have $x=bax$, which implies that $ba$ is the unit element of $L$, a contradiction. Since the only idempotent in $L$ is zero, the corner $0L0$ coincides with its socle, therefore $L$ is a ring such that every corner coincides with its socle but $L$ does not coincide with its socle.

The example in the paragraph before also shows that stable by local algebras at elements does not imply stable by corners.
}
 \end{example}
 \medskip

\begin{example}\label{EjemploIntercambio}
 { \bf (Exchange example.)}
 {\rm
The exchange property  (see \cite{AGS} for the definition)
 is stable by local algebras at elements, as shown in \cite[Theorem 1.4]{AGS}.

For the class of rings generated by idempotents (which is closed by corners), the exchange property is stable  by corners. To show this, suppose first that $R$ is a ring generated by the idempotents it contains, and that every corner of $R$ at an idempotent is an exchange ring; then $R$ is an exchange ring by means of \cite[Theorem 3.2]{AGS}. Now, if $R$ is a ring then every corner of $R$ is exchange, as has been proved in the paragraph before.
}
 \end{example}

\bigskip

Theorem \ref{reciproco} is a converse for Theorem \ref{equivlocal} in the case of $\sigma$-unital rings. In order to prove it we need first to establish some notation and the result that follows.
 \medskip

Recall that a ring $R$ is called \emph{$\sigma$-unital} in case there is a sequence $\{u_n\}_{n\in \mathbb N}$ in $R$ such that 
$R=\cup_{n=1}^\infty u_n R u_n$ and $u_n=u_nu_{n+1}=u_{n+1}u_n$ for all $n\geq 1$. The sequence 
 $\{u_n\}_{n\in \mathbb N}$ is called a \emph{$\sigma$-unit}. 
 \medskip
  
Given two idempotents $e, f$ in a ring $R$, we write $e\leq f$ whenever $eRe \subseteq fRf$. With this notation $u_n\leq u_{n+1}$ for every $n\in \mathbb N$ whenever  $\{u_n\}_{n\in \mathbb N}$ is a $\sigma$-unit for a ring $R$.
\medskip

\begin{lemma}\label{desigualdad} Let $R$ and $S$ be two idempotent rings which are Morita equivalent. Let $e$ and $f$ be two idempotents in $R$ such that $e\leq f$. Then there exist a natural number $n$, two idempotents $u, v \in \MM_n(S)$, and ring isomorphisms  $\varphi_e: R_e\to  \MM_n(S)_u$, $\varphi_f: R_f\to \MM_n(S)_v$ such that $u\leq v$ and the following diagram commute:

$$
\begin {matrix} R_f &\overset{\varphi_f}{\longrightarrow} & \MM_n(S)_v\\
{_{i_e}}\uparrow & & \uparrow{_{i_f}}
\\
R_e & \underset{\varphi_e}{\longrightarrow}& \MM_n(S)_u
\end{matrix}
$$
\end{lemma}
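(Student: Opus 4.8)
The plan is to produce \emph{all} the required data from a single application of the machinery of Theorem \ref{equivlocal} (equivalently Corollary \ref{casoCorner}) to the \emph{larger} idempotent $f$, and then to extract the data for $e$ purely by restriction. Applying the theorem separately to $e$ and to $f$ would yield two unrelated sizes $n(e),n(f)$ and no control over the vertical inclusions, so the point is to fix one size, namely $n=n(f)$, and let $e$ inherit its isomorphism from that of $f$. First I would unwind the hypothesis $e\leq f$: since $e\in eRe\subseteq fRf$ and $e$ is idempotent, one gets $ef=fe=e$, so $e$ is an idempotent of the corner $R_f=fRf$ and $eR_fe=(ef)R(fe)=eRe=R_e$. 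Thus $R_e$ is genuinely the corner of $R_f$ at the subidempotent $e$.

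Next I would apply Corollary \ref{casoCorner} to $f$ to obtain a natural number $n$, an idempotent $v\in\MM_n(S)$, and a ring isomorphism $\varphi_f\colon R_f\to\MM_n(S)_v=v\MS v$ (if one wants them explicitly, $v=\y\x\s\y\x\s$ and $\varphi_f(frf)=\y(frf)\x\s$ from the decomposition $f=\sum_i x_is_iy_i$ used in the proof of Theorem \ref{equivlocal}, but the abstract statement already suffices). I would then \emph{define} the data for $e$ by setting
\[
u:=\varphi_f(e)\in\MS,\qquad \varphi_e:=\varphi_f|_{R_e}.
\]
Because $e^2=e$ and $\varphi_f$ is a ring homomorphism, $u^2=\varphi_f(e^2)=\varphi_f(e)=u$, so $u$ is an idempotent, and $u\in v\MS v$ as it lies in the image of $\varphi_f$. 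For an idempotent lying in a corner this forces $uv=vu=u$, whence $u\MS u=u(v\MS v)u\subseteq v\MS v$, i.e. $u\leq v$. Since a ring isomorphism carries the corner at an idempotent isomorphically onto the corner at its image, $\varphi_e$ is a ring isomorphism from $R_e=eR_fe$ onto $u(v\MS v)u=u\MS u=\MM_n(S)_u$, as required.

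Finally, commutativity of the square is immediate by construction: with $i_e\colon R_e\hookrightarrow R_f$ and $i_f\colon\MM_n(S)_u\hookrightarrow\MM_n(S)_v$ the inclusions, for every $x\in R_e$ one has $\varphi_f(i_e(x))=\varphi_f(x)=i_f(\varphi_e(x))$ precisely because $\varphi_e$ is the restriction of $\varphi_f$ and the vertical maps are inclusions. I expect the only genuinely non-formal step to be the corner identity $u\MS u=u(v\MS v)u$, which simultaneously yields $u\leq v$ and pins down the codomain $\MM_n(S)_u$ of $\varphi_e$; once the data for $e$ is obtained from $\varphi_f$ rather than independently, everything else is routine bookkeeping.
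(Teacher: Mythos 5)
Your proof is correct, and it reaches the same data as the paper by a cleaner, more conceptual route. Both you and the paper share the crucial idea: apply the construction of Theorem \ref{equivlocal} only once, to the larger idempotent $f$, so that the size $n$ and the isomorphism $\varphi_f$ are fixed, and then derive everything for $e$ from $\varphi_f$ (your remark that two separate applications would give unrelated sizes is exactly the point). Where you diverge is in how the data for $e$ is produced. The paper stays inside the Morita context: it sets $\zt=\s\y e\x\s$, defines $u=\y\x\zt\y\x\zt$ and $\varphi_e(ere)=\y(ere)\x\zt$, and then verifies by explicit element computations that $u$ is idempotent, that $u\leq v$, that $\varphi_e$ is an isomorphism, and that the square commutes. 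You instead set $u:=\varphi_f(e)$ and $\varphi_e:=\varphi_f|_{R_e}$, and obtain all four assertions formally from the facts that a ring isomorphism carries the corner at an idempotent onto the corner at its image, and that an idempotent $u$ lying in $v\MM_n(S)v$ satisfies $uv=vu=u$, hence $u\MM_n(S)u=u\bigl(v\MM_n(S)v\bigr)u\subseteq v\MM_n(S)v$. In fact the two constructions coincide: since $\x\zt\y=e$ and $fef=e$, the paper's $u$ collapses as $\y\x\zt\y\x\zt=\y(\x\zt\y)\x\zt=\y e\x\zt=\y(e(\x\s\y)e)\x\s=\y e\x\s=\varphi_f(e)$, and the paper's final commutativity computation is precisely the verification that its $\varphi_e$ agrees with $\varphi_f$ on $R_e$. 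What the paper's route buys is explicitness---concrete matrix formulas in the spirit of Theorem \ref{equivlocal}, which is the style used again in Lemmas \ref{escalera} and \ref{escalera2}; what your route buys is brevity and transparency: the only computation left is the corner identity, the verifications cannot go wrong, and the argument visibly generalizes to any isomorphism $R_f\cong T$ and any subidempotent of $f$. One small point to make fully explicit if you write this up: $e\leq f$ gives $e=e^3\in eRe\subseteq fRf$, and then $ef=fe=e$ follows since $f$ acts as a unit on $fRf$; you use this both for $eR_fe=R_e$ and for $e$ being an idempotent of the unital ring $R_f$, and it is the one place where the hypothesis $e\leq f$ enters.
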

\begin{proof}
Consider a surjective Morita context $(R, S, N, M)$ for the rings $R$ and $S$, and write 
$f=\sum_{i=1}^n x_is_iy_i$, with $x_i\in N$,  $s_i\in S$ and $y_i\in M$. Denote $\x=(x_1, \dots, x_n)$, $\s=diag (s_1, \dots, s_n) \in \MM_n(S)$, $ \y=(y_1, \dots, y_n)^t$ and $\MS=\MM_n(S)$, then
$f= \x\s\y$ and the element $v:=\y\x\s\y\x\s$ is an idempotent in $\MS$. Following the proof of Theorem \ref{equivlocal} we see that the map $\varphi_f: fRf\rightarrow \MM_n(S)_v$ given by $\varphi_f(frf)=\y(frf)\x\s$ defines an isomorphism of rings.
\par
Since $e\in R_f$, $e=fef=\x\s\y e \x\s\y$. Define $\zt=\s\y e \x\s\in \MM_n(S)$; then $e=\x\zt\y$. 
Define $u=\y\x\zt\y\x\zt$. We see that $u$ is an idempotent. Indeed, 
$u^2=\y\x\zt\y\x\zt\y\x\zt\y\x\zt= \y (\x\zt\y)^3\x\zt = \y (e)^3\x\zt=  \y (e)\x\zt = \y (\x\zt\y)\x\zt=u$. Moreover, $u\leq v$ as

$\begin{aligned}
u &= \y\x\zt\y\x\zt =\y\x\s\y e \x\s\y\x\s\y e \x\s = \y\x\s\y \x\s\y e \x\s\y \x\s\y\x\s\y \x\s\y e \x\s\y \x\s 
 \\
&
=\y\x\s\y \x\s\y e \x\s\y \x\s\y\x\s\y \x\s\y  \x\s\y e \x\s\y \x\s\y \x\s
\\
&
= (\y\x\s\y \x\s)\y e \x\s\y \x\s\y\x\s\y \x\s\y  \x\s\y e \x\s(\y \x\s\y \x\s) =  (v)\y e \x\s\y \x\s\y\x\s\y \x\s\y  \x\s\y e \x\s(v) 
\end{aligned}$

This shows $u\leq  v$.  Now, define $\varphi_e: eRe\rightarrow \MM_n(S)_u$  by $\varphi_e(ere)= \y (ere)\x\zt$.
\medskip

Since $e=\x\s\y e \x \s \y = \x\zt\y$ is an idempotent, $e= \x\zt\y\x\zt\y\x\zt\y =  \x\zt(\y\x\zt\y\x\zt)\y = \x\zt u \y $. On the other hand, $u = \y\x\zt \y\x\zt = \y e \x \zt$. Now, resoning as in the proof of Theorem \ref{equivlocal} we prove that   
$\varphi_e$ gives an isomorphism of rings. To finish the proof we see that the diagram in the statement is commutative. To this end, take $ere\in eRe$. Then $\varphi_f(ere)= \y er e\x\s =\y ere^3\x\s=
\y ere  \x\zt\y e\x\s=\y ere  \x \s\y e\x\s  \y e\x\s = \y ere (\x \s\y) e\x(\s  \y e\x\s) = \y ere f e\x\zt = \y ere \x \zt =\varphi_e(ere)$. This shows our claim.
\end{proof}

\medskip

\begin{lemma}\label{escalera}
Let $R$  be a $\sigma$-unital ring and let $\{e_n\}$ be a $\sigma$-unit for $R$, where the $e_n$'s are von Neumann regular elements. 
Then: 
$${\underset {\longrightarrow}{lim}}\ \MM_{2^n} (R_{e_n})\cong  {\underset {\longrightarrow}{lim}}\ {\rm FM}(R)_ {\overline e_n^{2^n}} = {\rm FM}(R),$$
 where 
\newpage
$\begin{matrix}
\hspace{4em} \overbrace{\hspace{5.7em}}^{2^n} 
\end{matrix}
$

$\overline e_n^{2^n}=
\begin{pmatrix}
 \begin{array}{ccc}
   e_n &  &  \\
   & \ddots &  \\
   &  & e_n
 \end{array}
  & \vline\begin{array}{ccc}
    &  &  \\
   & 0 &  \\
   &  & 
 \end{array}\\
 \hline
   \begin{array}{ccc}
    &  &  \\
   & 0 &  \\
   &  & \\
 \end{array}& \vline \begin{array}{ccc}
    &  &  \\
   & 0 &  \\
   &  &
 \end{array}
\end{pmatrix}
\in {\rm FM}(R).$
\end{lemma}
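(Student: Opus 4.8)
The plan is to analyze the two directed systems level by level and then to reduce everything to honest corners at idempotents, where direct limits become increasing unions. Write $g_n=\overline e_n^{2^n}$, so that ${\rm FM}(R)_{g_n}=g_n\,{\rm FM}(R)\,g_n$ consists exactly of the infinite matrices supported in the top-left $2^n\times 2^n$ block whose $(i,j)$ entry lies in $e_nRe_n=R_{e_n}$. First I would set up, for each fixed $n$, the block identification $\theta_n$ sending a matrix $(p_{ij})\in\MM_{2^n}(R_{e_n})$ to the infinite matrix with block $(p_{ij})$ and zeros elsewhere. The only thing to check is that $\theta_n$ preserves products, and the bookkeeping works out precisely because the local product on $g_n{\rm FM}(R)g_n$ carries a single factor $g_n$ in the middle, which on entries reproduces the single $e_n$ in the local product of $R_{e_n}$ (entrywise $e_nxe_n\,y\,e_n$, not $e_nxe_n^2ye_n$); thus $\theta_n\colon\MM_{2^n}(R_{e_n})\to{\rm FM}(R)_{g_n}$ is a ring isomorphism.

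The hard part, and the reason the naive approach fails, is that $e_n$ and hence $g_n$ are only von Neumann regular and not idempotent. Consequently ${\rm FM}(R)_{g_n}$ is \emph{not} a subring of ${\rm FM}(R)$ for the ambient product (the local and ambient products genuinely differ by a spurious factor $g_n$), and the set inclusions $R_{e_n}\subseteq R_{e_{n+1}}$ fail to be ring homomorphisms for the local products; so the transition maps of the system cannot be taken to be naive inclusions. To get around this I would pass to idempotents via Lemma \ref{isolocales}. Fix quasi-inverses $e_n'$ with $e_ne_n'e_n=e_n$ and $e_n'e_ne_n'=e_n'$, put $f_n=e_ne_n'$, and let $h_n=\overline{f_n}^{2^n}=g_ng_n'$ where $g_n'=\overline{e_n'}^{2^n}$; then $h_n$ is an idempotent of ${\rm FM}(R)$. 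Lemma \ref{isolocales} supplies ring isomorphisms $R_{e_n}\cong R_{f_n}=f_nRf_n$ and ${\rm FM}(R)_{g_n}\cong{\rm FM}(R)_{h_n}=h_n{\rm FM}(R)h_n$, the latter now an honest corner at an idempotent and so a genuine subring of ${\rm FM}(R)$ for the ambient product, on which transition maps can be taken to be plain inclusions.

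Finally I would compute the limit in this corner picture. The key observation is that $f_nRf_n=e_nRe_n$ as subsets of $R$: both containments follow at once from $e_n=e_ne_n'e_n$, so the idempotent corners are exactly the terms of the $\sigma$-unit filtration and still exhaust $R$. Hence $h_n\leq h_{n+1}$, the corners ${\rm FM}(R)_{h_n}$ form an increasing chain of subrings, and any finitely supported $A\in{\rm FM}(R)$ lies in ${\rm FM}(R)_{h_n}$ as soon as $2^n$ bounds its support and $e_n$ dominates its finitely many entries, which holds for large $n$ because $R=\bigcup_n e_nRe_n$. Therefore $\varinjlim{\rm FM}(R)_{h_n}=\bigcup_n{\rm FM}(R)_{h_n}={\rm FM}(R)$. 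It then remains to verify the routine (but genuinely necessary) compatibility: that $\theta_n$ composed with the isomorphisms of Lemma \ref{isolocales} intertwines the block-embedding transition maps of $\{\MM_{2^n}(R_{e_n})\}$ with the corner inclusions of $\{{\rm FM}(R)_{h_n}\}$. Granting this, passing to the limit yields $\varinjlim\MM_{2^n}(R_{e_n})\cong\varinjlim{\rm FM}(R)_{g_n}={\rm FM}(R)$, as claimed.
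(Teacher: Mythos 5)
Your first two steps are sound and agree with the paper's own route: the entrywise identification $\MM_{2^n}(R_{e_n})\cong {\rm FM}(R)_{\overline e_n^{2^n}}$ is exactly the paper's intermediate isomorphism, and the paper, like you, disposes of the non-idempotent $e_n$'s by invoking Lemma \ref{isolocales}. The proof breaks at your third step: the claim that $f_nRf_n=e_nRe_n$ \emph{as subsets of} $R$ is false. From $e_n=e_ne_n'e_n$ one only gets that $R_{e_n}$ and $f_nRf_n$ are \emph{isomorphic} as rings (that is precisely Lemma \ref{isolocales}); they need not coincide, nor even be comparable, as sets. Counterexample: in $R=\MM_2(K)$ take $a=E_{12}$ and $b=E_{21}$ (matrix units), so $aba=a$, $bab=b$, $f=ab=E_{11}$; then $aRa=KE_{12}$ while $fRf=KE_{11}$, and neither contains the other. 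This situation does fall under the hypotheses of the lemma, since $\{E_{12},1,1,\dots\}$ is a $\sigma$-unit of von Neumann regular elements for $\MM_2(K)$.

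Everything after that claim collapses with it, because the quasi-inverses $e_n'$ are arbitrary and carry no relation to the $\sigma$-unit structure: the idempotents $f_n=e_ne_n'$ need not satisfy $f_nf_{n+1}=f_n=f_{n+1}f_n$, so the corners $h_n{\rm FM}(R)h_n$ need not form an increasing chain, and their union need not be ${\rm FM}(R)$. Concretely, in $R=\MM_3(K)$ with $\sigma$-unit $\{E_{12},\,E_{11}+E_{22},\,1,1,\dots\}$, the element $e_1'=E_{21}+E_{23}$ is a legitimate quasi-inverse of $e_1=E_{12}$, giving $f_1=E_{11}+E_{13}$ and $f_2=E_{11}+E_{22}$; then $f_2f_1f_2=E_{11}\neq f_1$, so $f_1Rf_1\not\subseteq f_2Rf_2$ and your chain fails at the very first step. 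To repair the argument you must either prove that the $f_n$ can be \emph{chosen compatibly}, i.e.\ that a $\sigma$-unit of von Neumann regular elements can be replaced by a $\sigma$-unit of idempotents (this is the content of the paper's opening sentence ``we may assume that $e_n$ is an idempotent''), or else abandon inclusions altogether and define the transition maps of the system $\{\MM_{2^n}(R_{e_n})\}$ through isomorphisms, as the paper does via Lemma \ref{desigualdad} together with the block embeddings $\rho_n$, so that no set-theoretic containment between the corners is ever required.
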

\begin{proof}

By Lemma \ref{isolocales} we may assume that $e_n$ is an idempotent for every $n$.

 Take
$(r_{ij})\in {\rm FM}(R)$; let $n_1\in \mathbb N$ be such that $r_{ij}=0$ for all $i, j \geq n_1$; let $n_2\in \mathbb N$ be such that
$r_{ij}\in R_{e_{n_2}}$ for all $i, j$. Take $n= max\{n_1, n_2\}$. Then $(r_{ij})\in {\rm FM}(R)_{\overline e_n^{2^n}}$.
Note that for 
$$e_n^{2^n}:= \left(
\begin{matrix} e_ n & & \\
& \ddots & \\
& & e_n
\end{matrix}
\right) \in \MM_{2^n}(R)$$
we have $ {\rm FM}(R)_{\overline e_n^{2^n}} \cong \MM_{2^n}(R)_{ e_n^{2^n}} \cong \MM_{2^n}(R_{e_n})$.

Lemma \ref{desigualdad} and the transition monomorphisms  given by:

$$\begin{matrix}
\rho_n: & \MM_{2^n}(R_{e_n}) & \longrightarrow & \MM_{2^{n+1}}(R_{e_{n+1}})\\
& x & \mapsto & \left( \begin{matrix}x & 0 \\ 0 & 0 \end{matrix}\right)
\end{matrix}$$
induce transition monomorphisms from ${\rm FM}(R)_{\overline e_n^{2^n}}$ into 
${\rm FM}(R)_{\overline e_{n+1}^{2^{n+1}}}$.
\end{proof}

\begin{lemma}\label{escalera2} 
Let $R$ and $S$ be two $\sigma$-unital rings such that $R$ is local matricial isomorphic to $S$ and $S$ is local matricial isomorphic to $R$. 
Suppose that $R$ and $S$ have $\sigma$-units  $\{e_n\}$ and $\{f_n\}$, respectively, such that $e_n$  and $f_n$ are von Neumann regular elements,
and let $\{u_n\}$ and
$\{v_n\}$ be families of idempotents in  $\mathbb{M}_{t_n}(R)$ and in $\mathbb{M}_{m_n}(S)$, respectively, 
such that
for every $e_n$ the corner $R_{e_n}$ is isomorphic to $\mathbb{M}_{m_n}(S)_{v_n}$ and 
for every $f_n$ the corner $S_{f_n}$ is isomorphic to $\mathbb{M}_{t_n}(R)_{u_n}$, where $m_n$ and $t_n$ are natural numbers.
Then 
${\rm FM}(R) \cong {\rm FM}(S)$.
\end{lemma}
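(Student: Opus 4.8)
The plan is to turn the two rings into directed systems via Lemma \ref{escalera} and then to \emph{intertwine} those systems using the hypothesised corner isomorphisms. Write $A_n:=\mathbb{M}_{2^n}(R_{e_n})$ and $B_n:=\mathbb{M}_{2^n}(S_{f_n})$. Applying Lemma \ref{escalera} to $R$ (with $\sigma$-unit $\{e_n\}$) and to $S$ (with $\sigma$-unit $\{f_n\}$) gives ${\rm FM}(R)\cong \varinjlim A_n$ and ${\rm FM}(S)\cong \varinjlim B_n$, where the transition maps are the upper-left corner inclusions $\rho_n$ (which simultaneously encode the inclusions $R_{e_n}\subseteq R_{e_{n+1}}$ coming from $e_n\le e_{n+1}$). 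By Lemma \ref{isolocales} we may assume throughout that the $e_n$ and $f_n$ are idempotents.

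First I would manufacture cross-maps. The isomorphism $R_{e_n}\cong \mathbb{M}_{m_n}(S)_{v_n}$ induces, entrywise, an isomorphism $A_n\cong \mathbb{M}_{2^n m_n}(S)_{V_n}$ onto a corner of $\mathbb{M}_{2^n m_n}(S)$, where $V_n=\mathrm{diag}(v_n,\dots,v_n)$. Since $\{f_k\}$ is a $\sigma$-unit and $V_n$ has only finitely many entries, those entries all lie in some $S_{f_{k_0}}=f_{k_0}Sf_{k_0}$; hence every element of this corner has all its entries in $f_{k_0}Sf_{k_0}$, and choosing $k(n)\ge k_0$ with $2^{k(n)}\ge 2^n m_n$ the image lands inside ${\rm FM}(S)_{\overline f_{k(n)}^{2^{k(n)}}}\cong B_{k(n)}$. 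This yields injective homomorphisms $\iota_n\colon A_n\to B_{k(n)}$, and symmetrically the isomorphisms $S_{f_n}\cong \mathbb{M}_{t_n}(R)_{u_n}$ give injective homomorphisms $\kappa_n\colon B_n\to A_{l(n)}$. Here the $\sigma$-unital hypothesis is exactly what lets the finite data of each corner be absorbed into a single later stage of the opposite system.

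The heart of the argument is to organise $\iota_n$ and $\kappa_n$ into a commuting ladder
$$A_{n_0}\overset{\iota}{\longrightarrow} B_{p_0}\overset{\kappa}{\longrightarrow} A_{n_1}\overset{\iota}{\longrightarrow} B_{p_1}\overset{\kappa}{\longrightarrow}\cdots$$
with $n_0<n_1<\cdots$ and $p_0<p_1<\cdots$, so that the $A$-subsequence and the $B$-subsequence, equipped with the composite transitions, coincide with cofinal subsystems of the original systems. Then the standard inductive-limit intertwining lemma gives $\varinjlim A_n\cong \varinjlim B_n$, that is, ${\rm FM}(R)\cong {\rm FM}(S)$.

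The main obstacle is precisely the commutativity of this ladder: the composites $\kappa_{p_i}\iota_{n_i}\colon A_{n_i}\to A_{n_{i+1}}$ need not coincide with the structural transition maps $\rho$, since the hypothesised isomorphisms are chosen independently at each index and carry no coherence. I would resolve this by an inductive back-and-forth choice of the indices $n_i,p_i$: at each step the finitely many entries occurring in the map built so far are absorbed into a sufficiently late stage (using the $\sigma$-unit), and the freshly constructed cross-map is then corrected by an inner automorphism of the target matricial algebra so that the relevant triangle commutes on the nose. The doubling of the matrix size from $2^n$ to $2^{n+1}$ in Lemma \ref{escalera} is what provides the room needed to conjugate each copy of $A_{n_i}$ onto the standard upper-left corner, making this correction possible. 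Once the ladder commutes, the intertwining lemma delivers the desired isomorphism ${\rm FM}(R)\cong {\rm FM}(S)$.
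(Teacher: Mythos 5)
Your first two steps reproduce the paper's construction: the systems $A_n=\mathbb{M}_{2^n}(R_{e_n})$, $B_n=\mathbb{M}_{2^n}(S_{f_n})$ come from Lemma \ref{escalera}, and your cross-maps $\iota_n,\kappa_n$ (absorb the finitely many entries of the given idempotent into a later term of the opposite $\sigma$-unit, then pad with zeros) are exactly the paper's monomorphisms $(\dag)$ and $(\dag\dag)$. The genuine gap is the step you yourself single out as the heart of the matter: the claim that each triangle of the ladder can be made to commute on the nose by composing the fresh cross-map with an inner automorphism of its target. That claim is the uniqueness half of an Elliott-type exact intertwining, and nothing in the present setting supplies it. In the AF/ultramatricial situation it rests on the fact that two embeddings of matricial algebras \emph{over a field} with the same multiplicities are conjugate by an invertible element; here the building blocks are $\mathbb{M}_{2^n}(R_{e_n})$ with $R_{e_n}$ an arbitrary unital ring (an arbitrary corner of an arbitrary $\sigma$-unital ring), and no such uniqueness theorem exists. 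Concretely, the composite $\kappa\iota\colon A_{n_i}\to A_{n_{i+1}}$ and the structural embedding are two monomorphisms onto corners of $A_{n_{i+1}}$ whose image idempotents need not even be Murray--von Neumann equivalent; and even when they are, the extra room bought by the doubling $2^n\to 2^{n+1}$ only lets you conjugate the two \emph{corners} onto each other, after which the two maps onto that common corner may still differ by precomposition with an automorphism of $A_{n_i}$ that is not implementable by any inner automorphism of the ambient algebra (for instance, if $R_{e_{n_i}}$ is commutative with a nontrivial automorphism, the entrywise automorphism of $A_{n_i}$ moves the center and so is not inner, nor correctable by conjugation in $A_{n_{i+1}}$). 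Without exact commutativity of the ladder, the intertwining lemma gives no isomorphism of the limits, so the proof does not go through as described.

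It is worth seeing how the paper avoids, rather than solves, this coherence problem. After constructing $(\dag)$ and $(\dag\dag)$ it never intertwines two abstract systems: it rewrites each term $\mathbb{M}_{2^n}(S_{f_n})\cong \mathbb{M}_{2^n}\bigl(\mathbb{M}_{t_n}(R)_{u_n}\bigr)$ as a corner ${\rm FM}(R)_{\overline u_n^{2^n}}$ of the single ring ${\rm FM}(R)$, and the reverse maps $(\dag\dag)$ are used only to argue an exhaustion statement, namely that $\{\overline u_n^{2^n}\}$ is a $\sigma$-unit of ${\rm FM}(R)$; then Lemma \ref{escalera}, applied to ${\rm FM}(R)$ with this $\sigma$-unit, gives ${\rm FM}(S)\cong \varinjlim {\rm FM}(R)_{\overline u_n^{2^n}}={\rm FM}({\rm FM}(R))={\rm FM}(R)$. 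Because the relevant corners sit nested inside one ring, the transition maps are inclusions and no correction of maps is called for. If you want to retain your ladder scheme, you would have to prove a conjugacy (uniqueness) theorem for the embeddings involved, which is precisely what is unavailable for general corners; the viable repair is to reorganize the argument, as the paper does, so that all transition maps become honest inclusions inside ${\rm FM}(R)$.
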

\begin{proof}
By Lemma \ref{isolocales} we may assume that $e_n$ and $f_n$ are idempotents for every $n$.

For an arbitrary $n$, write $u_n=(u_n^{ij})\in \mathbb{M}_{t_n}(R)$, where $\{u_n^{ij}\}\subseteq R= \bigcup R_{e_m}$. Then there exists $q\in \mathbb N$ such that for every $i, j$ we have $u_n^{ij}\in R_{e_q}\cong{ M_{m_q}(S)}_{v_q}$. Consider 
the element $e_q^{t_n}:= diag (e_q, \dots, e_q)\in \mathbb{M}_{t_n}(R)$ and $v_q^{t_n}:= diag (v_q, \dots, v_q)\in \mathbb{M}_{t_nm_q}(S)$. Then $u_n= (u_n^{ij})\in \mathbb{M}_{t_n}(R_{e_q})= \mathbb{M}_{t_n}(R)_{e_q^{t_n}}$, which is isomorphic to $\mathbb{M}_{t_n}\left(\mathbb{M}_{m_q}(S)_{v_q}\right)= \left(\mathbb{M}_{t_nm_q}(S)\right)_{v_q^{t^n}}$. 
This produces a monomorphism $\varphi_n: \mathbb{M}_{t_n}(R)_{u_n} \to \mathbb{M}_{t_n}(R)_{e_q^{t_n}}$ and consequently a monomorphism from $S_{f_n}$ into $\mathbb{M}_{t_nm_q}(S)_{v_q^{t_n}}$ as follows.

$$ (\dag) \quad S_{f_n} \cong \mathbb{M}_{t_n}(R)_{u_n}\overset{\varphi_n}{\longrightarrow} \mathbb{M}_{t_n}(R)_{e_q^{t_n}}\cong
\mathbb{M}_{t_nm_q}(S)_{v_q^{t_n}}$$

Analogously, for every $n\in \mathbb{N}$ we can find a monomorphism $\psi_n$ giving

$$(\dag\dag)\quad R_{e_r} \cong \mathbb{M}_{l_r}(S)_{v_r}\overset{\psi_r}{\longrightarrow} \mathbb{M}_{l_r}(S)_{e_p^{l_r}}\cong
\mathbb{M}_{l_rs_p}(R)_{u_p^{l_r}}$$

Then 

$$
\begin{aligned}
FM(S)
&
\overset {(1)}{=} {\underset {\longrightarrow}{lim}}\ \mathbb{M}_{2^n} (S_{f_n})\cong
 {\underset {\longrightarrow}{lim}}\ \mathbb{M}_{2^n} \left(\mathbb{M}_{t_n}(R)_{u_n}\right)
\overset {(2)}{=}  {\underset {\longrightarrow}{lim}}\ \mathbb{M}_{2^n} \left({\rm FM}(R)_{\overline{u}_n}\right)
\\
&\overset {(3)}{=}  
{\underset {\longrightarrow}{lim}}\ \mathbb{M}_{2^n}({\rm FM}(R))_{\overline u_n^{2^n}}
\overset {(4)}{=}  {\underset {\longrightarrow}{lim}}\ {\rm FM}(R)_{\overline u_n^{2^n}}
\overset {(1)}{=} {\rm FM}({\rm FM}(R)) = {\rm FM}(R)
\end{aligned}$$

(1) By Lemma \ref{escalera}.  
\medskip

(2) For 
$\overline{u}_n:=
\begin{pmatrix}
  u_n
  & \vline &
   0
 \\
 \hline
0
& \vline &
  0
\end{pmatrix}
\in {\rm FM}(R).$
\medskip

(3) Where $\overline u_n^{2^n}:= \left(
\begin{matrix} \overline u_n & & \\
& \ddots & \\
& & \overline u_n
\end{matrix}
\right) \in \MM_{2^n}({\rm FM}(R))$

(4) Because $\{\overline u_n^{2^n}\}$ is a $\sigma$-unit for ${\rm FM}(R)$. This fact is proved in the following lines. Given $x\in R$, let $r$ be in $\mathbb N$ such that $x\in R_{e_r}$, which can be seen as a subring of $\mathbb M_{l_rs_p}(R)_{u_p^{l_r}}$ by $(\dag\dag)$. Take $n\in \mathbb N$ such that $u_p\leq u_n$ and $l_r\leq 2^n$. Then $\overline u_p^{l_r}\leq \overline u_n^{2^n}$, which shows the claim.
\end{proof}

By adapting the ideas of the Brown-Green-Rieffel Theorem, Ara stated a purely algebraic analogue of this theorem. This was precisely the equivalence among conditions (i) and (iii) in the theorem that follows. Here we include a third equivalent condition (under certain restrictions).

\begin{theorem}\label{reciproco}
Let $R$ and $S$ be two idempotent rings. Consider the following conditions:
\begin{enumerate}[\rm (i)]
\item The rings $R$ and $S$ are Morita equivalent.
\item The rings $R$ and $S$ are  local matricial isomorphic each other.
\item ${\rm FM}(R)\cong {\rm FM}(S)$.
\end{enumerate}

Then:

 {\rm (i)} implies {\rm (ii)}. 

If $R$ and $S$ are $\sigma$-unital rings then  {\rm (i)} and  {\rm (iii)} are equivalent. 

If, moreover,
there are $\sigma$-units  $\{e_n\}$ and $\{f_n\}$, for $R$ and $S$, respectively, such that $e_n$ is von Neumann regular in $R$ and $f_n$ is von Neumann regular in $S$, then the three conditions are equivalent.
\end{theorem}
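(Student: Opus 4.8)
The plan is to prove Theorem \ref{reciproco} by establishing the three implications separately, with the heavy machinery of the preceding lemmas doing most of the work. The implication (i) $\Rightarrow$ (ii) is immediate: this is precisely the content of Theorem \ref{equivlocal}, which asserts that two Morita equivalent idempotent rings are local matricial isomorphic to each other, so nothing further is needed for this part. The genuine content lies in the two equivalences that hold for $\sigma$-unital rings, and I would organize the argument around a cycle that passes through the matrix-ring construction $\mathrm{FM}(\,\cdot\,)$.

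For the equivalence of (i) and (iii) under the $\sigma$-unital hypothesis, I would argue both directions through the common observation that $\mathrm{FM}(R)$ is Morita equivalent to $R$ whenever $R$ is idempotent (this is the algebraic analogue of stabilization; one produces a surjective Morita context between $R$ and $\mathrm{FM}(R)$ using the rows and columns as the connecting bimodules). Granting this, if $R$ and $S$ are Morita equivalent then so are $\mathrm{FM}(R)$ and $\mathrm{FM}(S)$, since Morita equivalence is transitive; but the cited purely algebraic Brown--Green--Rieffel theorem of Ara (the equivalence of (i) and (iii) that the paragraph before the statement attributes to him) says exactly that for $\sigma$-unital rings Morita equivalence of $R$ and $S$ forces the isomorphism $\mathrm{FM}(R)\cong\mathrm{FM}(S)$. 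Conversely, an isomorphism $\mathrm{FM}(R)\cong\mathrm{FM}(S)$ combined with the stabilization equivalences $R\sim\mathrm{FM}(R)$ and $S\sim\mathrm{FM}(S)$ yields, by transitivity, that $R$ and $S$ are Morita equivalent. Thus this equivalence is essentially a repackaging of Ara's theorem, which the authors are entitled to invoke.

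The substantive new content is the third assertion, where the additional hypothesis on von Neumann regular $\sigma$-units lets us upgrade to the full equivalence of all three conditions. Here the key is to close the loop by proving (ii) $\Rightarrow$ (iii) directly, since (i) $\Rightarrow$ (ii) and (iii) $\Rightarrow$ (i) are already in hand. This is exactly where Lemma \ref{escalera2} enters: assuming $R$ and $S$ are local matricial isomorphic to each other, and that both possess $\sigma$-units consisting of von Neumann regular elements, Lemma \ref{escalera2} delivers the isomorphism $\mathrm{FM}(R)\cong\mathrm{FM}(S)$ verbatim. The regularity hypothesis is precisely what allows Lemma \ref{isolocales} to replace each local algebra $R_{e_n}$ at a regular element by a genuine corner, so that the direct-limit computation of Lemma \ref{escalera} applies and the transition maps assemble correctly. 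With (ii) $\Rightarrow$ (iii) established, and (iii) $\Rightarrow$ (i) following from the $\sigma$-unital equivalence already proved, the cycle (i) $\Rightarrow$ (ii) $\Rightarrow$ (iii) $\Rightarrow$ (i) closes and all three conditions coincide.

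I expect the main obstacle to be the bookkeeping in assembling the direct limits so that the transition monomorphisms are compatible, but this has already been isolated and discharged in Lemma \ref{escalera2}; at the level of the theorem itself the argument is a clean assembly of the prior lemmas, so the proof should be short. The one point demanding care is to state precisely which hypotheses are active in each of the three claims --- in particular that the regularity of the $\sigma$-units is needed only for the final equivalence, where it feeds Lemma \ref{escalera2} --- and to invoke the stabilization fact $R\sim\mathrm{FM}(R)$ and Ara's theorem with appropriate citation rather than reproving them.
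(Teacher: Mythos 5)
Your proposal is correct and follows essentially the same route as the paper: (i) $\Rightarrow$ (ii) by Theorem \ref{equivlocal}, the equivalence of (i) and (iii) for $\sigma$-unital rings by citing Ara's algebraic Brown--Green--Rieffel theorem, and (ii) $\Rightarrow$ (iii) by Lemma \ref{escalera2}, closing the cycle (i) $\Rightarrow$ (ii) $\Rightarrow$ (iii) $\Rightarrow$ (i) under the von Neumann regular $\sigma$-unit hypothesis. Your added remark explaining the stabilization equivalence $R\sim{\rm FM}(R)$ is a harmless elaboration of the cited result rather than a departure from the paper's argument.
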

\begin{proof}
(i) implies (ii) follows by Theorem \ref{equivlocal}  and Lemma \ref{desigualdad}.
\par
(i) is equivalent to (iii) as stated in \cite[Theorem 2.1]{Ara}.
\par
(ii) implies (iii) is Lemma \ref{escalera2}.
\end{proof}

\begin{remark}{\rm 
Note that in the unital case condition (i) is equivalent to say that $R$ is local matricial isomorphic to $S$.  This is not what happens for arbitrary $\sigma$-unital rings, as shown in the example that follows.
}
\end{remark}

\begin{example}\label{Noengeneral}{\rm Let $K$ be a field. Consider the rings $R=FM(K)$, and let $S=RCFM(K)$. Then, $R$ is local matricial isomorphic to $S$ as for every idempotent $e$ in $R$, the ring $eRe$ is isomophic to $eSe$, but $R$ and $S$ are not Morita equivalent as $R=\soc(R)=\soc(S)$, but $S$ does not coincide with its socle, and coincidence with the socle is a Morita invariant property for semiprime idempotent rigs (see \cite[Theorem 2.4]{AMMS2}).
}
\end{example}
\medskip


\section{Applications to Morita invariant properties}

In order to show that certain properties are Morita invariant, a technique that has been used is to show that the properties are stable under taking local algebras at elements and taking matrices. Some examples of this can be found in \cite[Theorem 2.1]{AGS}, where it was shown that for the class of idempotent rings the exchange property is Morita invariant, and in 
\cite[Theorem 2.3]{Ara}, where it was proved that for the class of s-unital rings, being von Neumann regular is a Morita invariant property.

These ideas have been used here in order to obtain a standard method to prove when a property is Morita invariant.

\begin{definition} {\rm
Consider a class  $\mathcal C$ of rings and let $P$ be a property of rings.

The property $P$ is said to be \emph{Morita invariant} (in $\mathcal C$) if whenever a ring of $\mathcal C$  satisfies this property, every Morita equivalent ring in $\mathcal C$ also satisfies $P$.

The property $P$ is said to be \emph{stable under taking matrices} (or \emph{by matrices}) if for every ring $R$ in the class, $R$ satisfies $P$ if and only if every matrix algebra $\MM_n(R)$, for any $n\in \mathbb N$, satisfies property $P$.
}
\end{definition}

\begin{theorem}\label{moritainvariant}
A property $P$ is Morita invariant for idempotent rings if it is stable under taking local algebras at  von Neumann regular elements and under taking matrices. Equivalently, the property $P$ is Morita invariant for idempotent rings if it is stable by corners and under taking matrices.

The same statements are true by changing the class  $\mathcal{I}$ of idempotent rings for any class included in $\mathcal{I}$.
\end{theorem}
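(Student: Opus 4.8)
The plan is to reduce the statement to its corner formulation and then feed the structural conclusion of Corollary \ref{casoCorner} into the two stability hypotheses. First note that, by Lemma \ref{stabilityBYcornerISstabilityBYlocalatvNr}, being stable by local algebras at von Neumann regular elements and being stable by corners are the very same condition; hence the two displayed sentences are equivalent, and it suffices to prove that if $P$ is stable by corners and under taking matrices then $P$ is Morita invariant. So I take two Morita equivalent idempotent rings $R$ and $S$ with $R$ satisfying $P$, and I aim to show $S$ satisfies $P$. Since $P$ is stable by corners, this reduces to checking that every corner $S_e = eSe$, with $e = e^2 \in S$, satisfies $P$.

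The core of the argument is the passage from a corner of $S$ to a corner of a matrix ring over $R$. Because Morita equivalence is symmetric, I may apply Corollary \ref{casoCorner} with the roles of $R$ and $S$ interchanged: for the given idempotent $e \in S$ there exist $n \in \mathbb{N}$ and an idempotent $u \in \MM_n(R)$ with $S_e \cong \MM_n(R)_u$. Now I chain the hypotheses. As $R$ satisfies $P$ and $P$ is stable under taking matrices, $\MM_n(R)$ satisfies $P$; as $\MM_n(R)$ is idempotent and $P$ is stable by corners, its corner $\MM_n(R)_u$ satisfies $P$; and since $P$ is a ring-theoretic, hence isomorphism-invariant, property, the isomorphic ring $S_e$ satisfies $P$ too. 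As this holds for every idempotent $e \in S$, every corner of $S$ has $P$, and a final appeal to stability by corners (in the direction ``all corners have $P$ implies the ring has $P$'') gives that $S$ has $P$. This establishes Morita invariance in $\mathcal{I}$.

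The one point requiring care --- and the place where the argument could go wrong --- is that at each use the stability hypotheses must be applied to an idempotent ring. This is exactly where the blanket assumption that $R$ and $S$ are idempotent enters: a matrix ring $\MM_n(R)$ over an idempotent ring is again idempotent (each elementary matrix $rE_{ij}$ factors through $R = R^2$), and a corner $eRe$ of an idempotent ring is unital with unit $e$, hence idempotent. Thus all the auxiliary rings produced along the way, namely $\MM_n(R)$, $\MM_n(R)_u$ and the corners $S_e$, remain inside $\mathcal{I}$, so the hypotheses legitimately apply. For the last assertion, replacing $\mathcal{I}$ by any subclass $\mathcal{C} \subseteq \mathcal{I}$ leaves the reasoning untouched: the decomposition of $S_e$ and the two stability chains are run verbatim for rings of $\mathcal{C}$, while all auxiliary rings stay idempotent. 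I expect this bookkeeping --- verifying that taking matrices and corners never leaves the relevant class --- to be the main, if modest, obstacle, the remainder being a direct concatenation of Corollary \ref{casoCorner} with the definitions of the two stability notions.
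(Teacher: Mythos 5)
Your proof is correct and takes essentially the same route as the paper: the paper chains Theorem \ref{equivlocal} (stated for local algebras at von Neumann regular elements) with the two stability hypotheses exactly as you chain Corollary \ref{casoCorner} with them in the corner formulation, the two formulations being interchangeable via Lemma \ref{stabilityBYcornerISstabilityBYlocalatvNr}, which both you and the paper invoke. The only cosmetic differences are the direction of transfer (the paper passes $P$ from $S$ to $R$, you from $R$ to $S$, equivalent by symmetry of Morita equivalence) and your additional, harmless bookkeeping that the auxiliary rings $\MM_n(R)$ and the corners remain idempotent.
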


\begin{proof}
Let $R$ and $S$ be two idempotent rings which are Morita equivalent and suppose that $S$ satisfies condition $P$. By Theorem \ref{equivlocal} for every von Neumann regular element $a\in R$ there exist $n\in \mathbb N$ and  $b\in \MM_n(S)$ such that
$R_a\cong \MM(S)_b$. Since Property $P$ is stable under local algebras at von Neumann regular elements and under taking matrices, $R_a$ satisfies Property $P$. Apply again that this property is stable under taking local algebras at  von Neumann regular elements to get that $R$ satisfies Property $P$. The equivalence follows from Lemma \ref{stabilityBYcornerISstabilityBYlocalatvNr}. 

The last statement is immediate.
\end{proof}

Recall that a ring $R$ is said to \emph{have enough idempotents} if it contains a set of orthogonal idempotents $\{e_\lambda\}_{\lambda\in \Lambda}$ such that $R=\bigoplus_{\lambda\in \Lambda}Re_\lambda =\bigoplus_{\lambda\in \Lambda}e_\lambda R$. 

The set $\{e_\lambda\}_{\lambda\in \Lambda}$ is called a \emph{complete set of idempotents}. 
Following \cite[Proposition 5.20]{Leandro}, if $R$ has enough idempotents, then $R$ is a ring with a set of local units. (Recall that a set $\mathcal{E}$ of commuting idempotents  in a ring $R$ is called a \emph{set of local units for} $R$ if for every finite subset $X$ of $R$ there exists an idempotent $e\in \mathcal{E}$ such that $X\subseteq eRe$).

Examples of rings with enough idempotents are Leavitt path algebras. The class of rings with enough idempotents is contained in the class of idempotent rings. 
\medskip

In what follows we will use this result to show that some properties are Morita invariant.
\medskip

\emph{Locally artinian/noetherian rings.}
\medskip

We say that a ring $R$ is \emph{locally (left/right) artinian/noetherian} if for every finite subset $X$ of $R$ there exists an idempotent $e\in R$ such that
$X\subseteq eRe$ and $eRe$ is (left/right) artinian/noetherian.

\begin{lemma} \label{matrizde} For idempotent rings, the property of  being locally (left/right) artinian/noe\-therian:
\begin{enumerate}[\rm (i)]
\item Is stable under taking matrices. 
\item Is stable by corners.
\end{enumerate}
\end{lemma}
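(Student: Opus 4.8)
The plan is to reduce everything to classical facts about \emph{unital} rings: that a full matrix ring over a unital (left/right) artinian/noetherian ring is again (left/right) artinian/noetherian, that a corner $eSe$ of such a unital ring at an idempotent $e$ inherits the same chain condition, and that for a unital ring ``locally (left/right) artinian/noetherian'' collapses to plain ``(left/right) artinian/noetherian'' — indeed, applying the definition to $X=\{1\}$ forces the witnessing idempotent to equal $1$. I will argue the left artinian case; the remaining three cases are identical word for word.

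For part (i) the substantive implication is that local left artinianness passes from $R$ to $\MM_n(R)$. Given a finite subset $X\subseteq \MM_n(R)$, I would let $Y\subseteq R$ be the finite set of all entries of all matrices of $X$, and use local left artinianness of $R$ to find an idempotent $e\in R$ with $Y\subseteq eRe$ and $eRe$ left artinian. Then $\mathbf e:=\mathrm{diag}(e,\dots,e)$ is an idempotent of $\MM_n(R)$, a short check gives $X\subseteq \mathbf e\,\MM_n(R)\,\mathbf e=\MM_n(eRe)$, and since $eRe$ is unital and left artinian, $\MM_n(eRe)$ is left artinian by the classical result. This produces the required corner, so $\MM_n(R)$ is locally left artinian; the converse is immediate from $\MM_1(R)=R$.

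For the forward implication of part (ii), I would run the argument through a single idempotent. Given an idempotent $e\in R$, apply local left artinianness to $X=\{e\}$ to obtain an idempotent $f\in R$ with $e\in fRf$ and $fRf$ left artinian. Since $e\in fRf$ forces $ef=fe=e$, the element $e$ is an idempotent of the unital ring $fRf$ and $eRe=e(fRf)e$ is a corner of it; the classical fact that a corner of a unital left artinian ring is left artinian then shows $eRe$ is left artinian, hence (being unital) locally left artinian. Thus every corner of $R$ satisfies the property.

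The converse of (ii) is the step I expect to be the main obstacle, precisely because it is where the absence of a unit bites. To deduce that $R$ is locally left artinian from the left artinianness of all its corners, I must, for each finite $X\subseteq R$, first produce an idempotent $e$ with $X\subseteq eRe$ and only then invoke the hypothesis on that corner. Such enclosing idempotents need not exist in an arbitrary idempotent ring, but they do exist in the rings relevant to the applications — those with a set of local units, in particular the rings with enough idempotents such as Leavitt path algebras — where every finite subset lies in some corner $eRe$; on these the converse goes through verbatim. I would therefore isolate ``for a unital corner, locally artinian equals artinian'' as a preliminary observation and make the local-unit structure explicit at this final step, so that both directions simply read off the classical unital results.
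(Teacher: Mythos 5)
Your proof of part (i) and of the forward half of part (ii) is essentially the paper's own argument. For (i), the paper likewise passes from a finite $X\subseteq\MM_n(R)$ to the finite set $Y$ of entries, encloses $Y$ in an artinian/noetherian corner $eRe$, and uses $f=\mathrm{diag}(e,\dots,e)$ together with $f\MM_n(R)f\cong\MM_n(eRe)$ and \cite[(1.21)]{L}; the converse via $\MM_1(R)=R$ is as trivial as you say. For (ii), the paper's entire printed proof is the citation of \cite[(21.13)]{L} (corners of unital artinian/noetherian rings inherit the chain condition); your argument makes explicit the step the paper leaves tacit, namely enclosing the given idempotent $e$ in a unital artinian corner $fRf$ by applying the definition to $X=\{e\}$, so that $eRe=e(fRf)e$ is a corner of a unital artinian ring. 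Your preliminary observation that for unital rings ``locally artinian'' collapses to ``artinian'' is also correct and is implicitly used by the paper.

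The obstacle you flag in the converse of (ii) is a genuine gap in the paper, not a defect of your proposal. ``Stable by corners'' is defined in the paper as an equivalence, and the proof of Theorem \ref{moritainvariant} really does use the converse direction (it deduces that $R$ satisfies $P$ from the hypothesis that all local algebras of $R$ at von Neumann regular elements satisfy $P$); yet the printed proof of (ii) addresses only the forward direction, and the converse is in fact false for arbitrary idempotent rings. The paper's own Example \ref{EjemploZocalo} supplies the counterexample: a simple domain $L$ without identity is an idempotent ring whose only idempotent element is $0$, so its unique corner is the zero ring, which is vacuously locally artinian, while $L$ itself is not locally artinian, since no nonzero element lies in any $eLe$. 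Your repair --- restricting to rings with local units, where every finite subset does lie in some corner $eRe$, which is then unital and artinian by hypothesis --- is correct; it is compatible with the final sentence of Theorem \ref{moritainvariant} (rings with local units form a subclass of the idempotent rings), and it suffices for the intended applications, since Leavitt path algebras have local units. In short, your proposal both reproves what the paper proves and repairs what it does not.
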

\begin{proof}
(i). Let $R$ be an idempotent ring and suppose that it is locally (left/right) artinian/noe\-therian. Consider a finite subset $X$ of $\MM_n(R)$, and denote by $Y$ the set of the entries of the elements of $X$. Since $R$ is locally (left/right) artinian/noetherian, there exists an idempotent $e\in R$ such that $Y\subseteq eRe$ and the corner $eRe$ is  (left/right) artinian/noetherian. Denote by $f=diag(e, \dotsc, e)\in \MM_n(R)$. Then $X\subseteq f\MM_n(R) f\cong \MM_n(eRe)$, which is (left/right) artinian/noetherian (use \cite[(1.21)]{L}).

(ii) follows because every corner of a (left/right) artininian/noetherian ring is a (left/right) artininian/noetherian ring  by \cite[(21.13)]{L}.
\end{proof}

\begin{theorem}\label{locartinian} For idempotent rings, being locally (left/right) artininian/noetherian is a Morita invariant property.
\end{theorem}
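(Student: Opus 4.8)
The plan is to obtain this as an immediate application of the general criterion proved in Theorem \ref{moritainvariant}. That result says that, within the class of idempotent rings, any property which is stable by corners and stable under taking matrices is automatically Morita invariant. Consequently the whole argument reduces to verifying these two stability conditions for the property of being locally (left/right) artinian/noetherian, and both verifications are exactly what Lemma \ref{matrizde} supplies: its part (i) gives stability under taking matrices and its part (ii) gives stability by corners.

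Thus the proof would consist of a single step. I would invoke Lemma \ref{matrizde} to check that the hypotheses of Theorem \ref{moritainvariant} are satisfied, and conclude that the property is Morita invariant for idempotent rings. No additional computation is needed, since Theorem \ref{moritainvariant} is stated precisely for the class of idempotent rings (and, by its last assertion, for any subclass thereof), which matches the setting of the statement to be proved.

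If one instead wished to argue from first principles, the genuine content would all sit inside Lemma \ref{matrizde}, and the delicate point there is the stability by corners of the \emph{local} version of the condition rather than of the plain artinian/noetherian condition. The matrix direction is comparatively routine: a finite subset of $\MM_n(R)$ lives in $f\MM_n(R)f \cong \MM_n(eRe)$ for a diagonal idempotent $f$ built from a local unit $e$ of $R$, and matrix rings over (left/right) artinian/noetherian rings remain (left/right) artinian/noetherian. The main obstacle is therefore to transfer local artinian/noetherian-ness to a corner $eRe$: one must produce, for each finite $X\subseteq eRe$, an idempotent inside $eRe$ whose corner contains $X$ and is artinian/noetherian, and here the key fact is that a corner of a (left/right) artinian/noetherian ring is again (left/right) artinian/noetherian.
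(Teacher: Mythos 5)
Your proposal is correct and coincides with the paper's own proof, which likewise consists of the single step of applying Lemma \ref{matrizde} to verify the hypotheses of Theorem \ref{moritainvariant}. Your supplementary discussion of the content of Lemma \ref{matrizde} also matches the paper's argument, including the key fact that corners of (left/right) artinian/noetherian rings remain so.
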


\begin{proof} Apply Lemma \ref{matrizde} and Theorem \ref{moritainvariant}.
\end{proof}
\medskip

\emph{Categorically artinian rings.}
\medskip

Let $R$ be a ring with local units.
We say that $R$ is \emph{categorically left artinian} in case every finitely generated left $R$-module is left artinian/noetherian. The analogous definition of  \emph{categorically right artinian} is obvious. 

\begin{lemma}\label{equivArt}
Let $R$ be a semiprime ring and let $e$ be an idempontent in $R$. The following are equivalent:
 \begin{enumerate}[\rm (i)]
\item $Re$ is a left/right artinian $R$-module.
\item $eRe$ is a left/right artinian ring.
\end{enumerate}\end{lemma}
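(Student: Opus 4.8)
The plan is to prove the equivalence through a Galois-type correspondence between the lattice of left $R$-submodules of $Re$ and the lattice of left ideals of $eRe$, and then to transfer the descending chain condition across it; the right-handed statement follows by the symmetric argument applied to $eR$ (equivalently, by passing to the opposite ring, noting that semiprimeness is left-right symmetric). I treat only the left case.

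First I would introduce the two order-preserving maps $\alpha(N)=eN$, sending a left $R$-submodule $N\subseteq Re$ to $eN$ (a left ideal of $eRe$, since $(eRe)(eN)=e(ReN)\subseteq eN$ because $ReN\subseteq N$), and $\beta(I)=RI$, sending a left ideal $I$ of $eRe$ to the submodule $RI\subseteq Re$. A direct computation, using only $e^2=e$ and that $I=eI=Ie$ for $I\subseteq eRe$, gives $\alpha\beta(I)=eRI=I$; no hypothesis on $R$ is needed here. Consequently $\beta$ is an injective, order-preserving map of lattices. This already yields the implication (i)$\Rightarrow$(ii): a strictly descending chain of left ideals of $eRe$ would produce, via $\beta$, a strictly descending chain of submodules of $Re$, so if $Re$ is left artinian then so is $eRe$.

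For the reverse implication (ii)$\Rightarrow$(i) I would show that $\alpha$ is injective as well, which is exactly the place where semiprimeness enters. The engine is the following clean claim: if $M\subseteq Re$ is a left $R$-submodule with $eM=0$, then $M=0$. Indeed $M$ is then a left ideal of $R$, and for $m_1,m_2\in M$ one has $m_1m_2=(m_1e)m_2=m_1(em_2)=0$ (using $m_1\in Re$, so $m_1e=m_1$, together with $eM=0$); thus $M^2=0$, and a square-zero left ideal of a semiprime ring is zero. From this I would deduce that $N=ReN$ for every submodule $N\subseteq Re$, equivalently that $\alpha$ is injective: granting this, if $eN=eN'$ with $N'\subseteq N$ then $N=ReN=ReN'=N'$, and any descending chain of submodules of $Re$ maps under $\alpha$ to a descending chain of left ideals of $eRe$ which is eventually constant when $eRe$ is artinian, forcing the original chain to stabilize. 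Hence $Re$ is left artinian.

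The main obstacle is precisely the step $N=ReN$ (equivalently, the injectivity of $\alpha$). The natural reduction sets $N'=ReN\subseteq N$, observes $eN\subseteq N'$, and then a short computation gives $NRN\subseteq N'$ and $N\cdot N\subseteq N'$, so the ``overhang'' $N/N'$ is annihilated both by $e$ and by $N$. The delicate point is that $\{x\in N:ex=0\}$ is \emph{not} a left $R$-submodule, since $e(rx)\neq r(ex)$ in general, so the clean claim cannot be applied to a kernel directly; instead one must repackage the overhang as a genuine square-zero (one- or two-sided) object and then invoke semiprimeness to kill it. I expect this repackaging---converting $NRN\subseteq ReN$ into the vanishing of an honest square-zero ideal---to be the crux of the argument; once $N=ReN$ is secured, the remainder is the formal transfer of the descending chain condition described above.
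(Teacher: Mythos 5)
Your implication (i)$\Rightarrow$(ii) is correct and is in substance the paper's own argument: the paper also passes from a chain of left ideals $I$ of $eRe$ to the chain $RI$ inside $Re$ and recovers $I$ via $eRI=I$, so the correspondence $\beta$ is injective and the descending chain condition transfers; no semiprimeness is needed there. The genuine gap is in (ii)$\Rightarrow$(i), and it is precisely the step you flag at the end: the claim $N=ReN$ (equivalently, injectivity of $\alpha$ on comparable pairs $N'\subseteq N$ with $eN=eN'$) is never proved. Your ``clean claim'' (a submodule $M\subseteq Re$ with $eM=0$ must vanish) is correct, but the object that has to be killed is the quotient $N/ReN$, which is a subquotient of $Re$, not a submodule of $Re$, and semiprimeness only gives you leverage on one- or two-sided ideals (via $L^2=0\Rightarrow L=0$), not on subquotients of modules. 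What you submit is therefore a plan whose central step is an expectation (``I expect this repackaging\dots to be the crux''), not an argument.

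There is also a structural reason to doubt that the missing step can be filled in the form you propose. In your outline the artinian hypothesis on $eRe$ is used only at the very end, to push a chain condition across a lattice embedding that is supposed to come from semiprimeness alone; if such an embedding existed, the identical argument would prove the Noetherian analogue ($eRe$ Noetherian $\Rightarrow$ $Re$ Noetherian), a much stronger statement that the paper nowhere claims and that its method cannot reach. The paper's proof of (ii)$\Rightarrow$(i) uses artinianness \emph{before} any lattice considerations: $eRe$ is semiprime (because $R$ is) and artinian, hence semisimple; writing $e=f_1+\dots+f_n$ with $f_i$ orthogonal primitive idempotents, each $f_iRf_i=f_i(eRe)f_i$ is a division ring, so by Brauer's lemma for semiprime rings each $Rf_i$ is a minimal left ideal; hence $e\in\soc(R)$ and $Re=\sum_{i=1}^n Rf_i$ is a finite sum of simple modules, thus of finite length and in particular artinian. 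This is the collapse ``semiprime $+$ artinian $=$ semisimple'' that your approach would have to smuggle in somewhere, and nothing in your outline does. The honest repair is to abandon the injectivity of $\alpha$ as a consequence of semiprimeness alone and argue via the socle, as the paper does.
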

\begin{proof}

 (i) $\Rightarrow$ (ii). 
Take nonzero left ideals $eL_ne$ of $eRe$, for $n \in \mathbb N$. Then $ReL_ne$ are nonzero left $R$-modules of $Re$. If moreover $eL_ne \subseteq eL_{n+1}e$ then $ReL_ne \subseteq ReL_{n+1}e$. If $Re$ is left artinian then there exists $m\in \mathbb N$ such that $ReL_me = ReL_{m+r}e$ for every $r\in \mathbb N$, therefore for every $x\in L_{m+r}$ we have $exe=zeye$, where $z\in R$ and $y\in L_m$, and so $exe= ezeye \in eReeL_me\subseteq eL_me$. This shows $eL_me = eL_{m+r}e$ for every $r\in \mathbb N$. If we consider the right side the result can be proved analogously.

(ii) $\Rightarrow$ (i). If $eRe$ is an artinian ring, then $e$ belongs to the socle of $R$. This implies that $Re$ has finite uniform dimension, and so it is an artinian left $R$-module.
 \end{proof}

 \begin{proposition}\label{caracterCatArt} Let $R$ be a semiprime ring with enough idempotents and suppose $R=\bigoplus_{\lambda\in \Lambda}Re_\lambda =\bigoplus_{\lambda\in \Lambda}e_\lambda R$, where $\{e_\lambda\}_{\lambda\in \Lambda}$  is a complete set of idempotents. The following conditions are equivalent:
 \begin{enumerate}[\rm (i)]
\item $R$ is categorically left/right artinian.
\item $Re_\lambda$ is a left/right artinian $R$-module for every $e_\lambda$
\item $e_\lambda Re_\lambda$ is a left/right artinian ring for every $e_\lambda$.
\end{enumerate}
 \end{proposition}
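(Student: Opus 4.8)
The plan is to prove the three conditions equivalent by establishing a cycle of implications, leveraging the structural decomposition $R=\bigoplus_{\lambda\in\Lambda}Re_\lambda$ together with the two preceding results, Lemma \ref{equivArt} and the semiprimeness hypothesis. First I would observe that the equivalence of (ii) and (iii) is immediate from Lemma \ref{equivArt} applied to each idempotent $e_\lambda$: since $R$ is semiprime, $Re_\lambda$ is a left/right artinian $R$-module precisely when the corner $e_\lambda Re_\lambda$ is a left/right artinian ring. This handles one edge of the cycle essentially for free.

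The substantive work lies in connecting (i) to the other two conditions. To see that (i) implies (ii), I would note that each $Re_\lambda$ is a finitely generated left $R$-module (it is cyclic, being generated by the single idempotent $e_\lambda$), so if $R$ is categorically left artinian then by definition every finitely generated left $R$-module is artinian, and in particular each $Re_\lambda$ is artinian. The reverse implication, that (ii) implies (i), is the main obstacle and requires a reduction argument. The plan is to take an arbitrary finitely generated left $R$-module $M$ and express it as a quotient of a finite direct sum of the cyclic projectives $Re_\lambda$. Concretely, since $R=\bigoplus_\lambda Re_\lambda$ and $M$ is finitely generated and unital, each of the finitely many generators lies in $RM$ and hence can be written using finitely many of the $e_\lambda$; this should allow me to realize $M$ as a homomorphic image of a finite direct sum $\bigoplus_{i=1}^k Re_{\lambda_i}$.

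The key step is then the standard fact that being artinian is closed under finite direct sums and under taking quotient modules. Assuming (ii), each summand $Re_{\lambda_i}$ is a left artinian $R$-module, so the finite direct sum $\bigoplus_{i=1}^k Re_{\lambda_i}$ is left artinian, and therefore its quotient $M$ is left artinian as well. This yields that every finitely generated left $R$-module is artinian, which is exactly (i). The right-handed versions of all implications go through by the obvious symmetric arguments, using the decomposition $R=\bigoplus_\lambda e_\lambda R$ in place of $\bigoplus_\lambda Re_\lambda$.

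The point I expect to need the most care with is verifying that a finitely generated unital left $R$-module over a ring with enough idempotents really is a quotient of a finite sum of the $Re_\lambda$: one must use that $M=RM$ together with the local-units property to absorb each generator into $e_\lambda M$ for suitable $\lambda$, and then check that the induced map $\bigoplus_{i=1}^k Re_{\lambda_i}\to M$ is surjective. Once this surjection is in hand, the remaining module-theoretic steps (closure of the artinian property under finite direct sums and quotients) are routine and standard, so the proof reduces to assembling Lemma \ref{equivArt} with this generation lemma.
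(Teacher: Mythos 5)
Your proposal is correct, and its skeleton matches the paper's: both proofs pivot through condition (ii), and both dispose of (ii) $\Leftrightarrow$ (iii) by applying Lemma \ref{equivArt} to each $e_\lambda$, exactly as you do. The difference is in how (i) $\Leftrightarrow$ (ii) is handled: the paper proves nothing here, simply citing \cite[Proposition 1.2]{AAPS}, whereas you supply a self-contained argument --- (i) $\Rightarrow$ (ii) because each $Re_\lambda$ is cyclic, hence finitely generated, and (ii) $\Rightarrow$ (i) by exhibiting any finitely generated unital module $M$ as a quotient of a finite direct sum $\bigoplus_{i=1}^k Re_{\lambda_i}$ and invoking closure of the artinian property under finite direct sums and quotients. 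Your covering argument is sound, and you correctly flag the one point needing care: since $R$ has enough idempotents it has local units given by finite sums of distinct $e_\lambda$'s, so each generator $m_j$ of $M=RM$ satisfies $m_j=f_jm_j=\sum_s e_{\lambda_s}m_j$ for such a local unit $f_j$, which makes the natural map $\bigoplus Re_{\lambda_s}\to M$, $(x_s)\mapsto\sum_s x_s m_j$, surjective. What each approach buys: the paper's citation keeps the proof to two lines and defers to established literature (where the cited result is stated for rings with local units); your version is longer but self-contained, and it makes visible exactly where the hypotheses ``enough idempotents'' and $M=RM$ enter --- note that semiprimeness is never needed for (i) $\Leftrightarrow$ (ii) in either treatment, only for the Lemma \ref{equivArt} step, a point your proof makes explicit while the paper's citation obscures it.
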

 \begin{proof}
 (i) $\Leftrightarrow$ (ii) was proved in \cite[Proposition 1.2]{AAPS}.
 
 (ii) $\Leftrightarrow$ (iii) follows by Lemma \ref{equivArt}.

 \end{proof}
 
 \begin{lemma}\label{EstcatArt}
 For semiprime rings with enough idempotents, the property of being categorically left/right artinian:
  \begin{enumerate}[\rm (i)]
\item Is stable under taking matrices.
\item Is stable by corners.
\end{enumerate}
 \end{lemma}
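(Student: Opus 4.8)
The plan is to reduce both parts to the characterization in Proposition \ref{caracterCatArt}, which says that for a semiprime ring with enough idempotents and complete set $\{e_\lambda\}$, being categorically left/right artinian is equivalent to each corner $e_\lambda R e_\lambda$ being a left/right artinian ring. Throughout I will use that a corner $eRe$ at an idempotent is a \emph{unital} ring, with identity $e$, so that for such a ring ``categorically left/right artinian'' coincides with ``left/right artinian as a ring''; in particular every corner, being unital, is again semiprime with enough idempotents and hence lies in the class under consideration.

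For (i), I first record that $\MM_n(R)$ is semiprime and has enough idempotents: writing $E_{ii}(e_\lambda)$ for the diagonal matrix with $e_\lambda$ in the $(i,i)$ slot and zeros elsewhere, the family $\{E_{ii}(e_\lambda) : \lambda \in \Lambda,\ 1\le i\le n\}$ is a complete set of orthogonal idempotents for $\MM_n(R)$, since $\MM_n(R)=\bigoplus_{\lambda,i}\MM_n(R)E_{ii}(e_\lambda)=\bigoplus_{\lambda,i}E_{ii}(e_\lambda)\MM_n(R)$ follows at once from $R=\bigoplus_\lambda Re_\lambda=\bigoplus_\lambda e_\lambda R$. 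A direct computation shows $E_{ii}(e_\lambda)\MM_n(R)E_{ii}(e_\lambda)\cong e_\lambda R e_\lambda$. Applying Proposition \ref{caracterCatArt} to $\MM_n(R)$ with this complete set, $\MM_n(R)$ is categorically left/right artinian if and only if each $e_\lambda R e_\lambda$ is left/right artinian, which by the same Proposition applied to $R$ is exactly the condition that $R$ be categorically left/right artinian. This settles both implications at once.

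For (ii), the direction ``corners satisfy $P$ $\Rightarrow$ $R$ satisfies $P$'' is immediate: each $e_\lambda R e_\lambda$ is itself a corner, hence categorically left/right artinian by hypothesis, hence left/right artinian as a unital ring, and Proposition \ref{caracterCatArt} gives that $R$ is categorically left/right artinian. The other direction is the point requiring care. Given an arbitrary idempotent $e\in R$, I must show $eRe$ is left/right artinian. Using that the enough idempotents provide local units, I choose a finite set $F\subseteq\Lambda$ with $g=\sum_{\lambda\in F}e_\lambda$ satisfying $ge=eg=e$; then $Rg=\bigoplus_{\lambda\in F}Re_\lambda$ is a finite direct sum of the modules $Re_\lambda$, each left artinian by Proposition \ref{caracterCatArt}, so $Rg$ is a left artinian $R$-module. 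Since $e=ge$, the module $Re$ is a submodule of $Rg$ and is therefore left artinian, whence Lemma \ref{equivArt} yields that $eRe$ is a left artinian ring. The right-handed statement is analogous.

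The main obstacle is precisely this last direction: unlike $\MM_n(R)$, an arbitrary corner $eRe$ is taken at an idempotent that need not belong to, or be assembled from, the fixed complete set $\{e_\lambda\}$, so Proposition \ref{caracterCatArt} does not apply to $e$ directly. The device that overcomes it is to dominate $e$ by a local unit $g$ that \emph{is} a finite sum of the $e_\lambda$, pass to the left-module statement, where ``a submodule of an artinian module is artinian'' is available, and then transfer back from the module side to the ring side through Lemma \ref{equivArt}; the semiprimeness hypothesis is used exactly here, in order to invoke that Lemma.
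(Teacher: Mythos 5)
Your proposal is correct, and for part (i) and the easy direction of part (ii) it coincides with the paper's argument: the same complete set of idempotents $\{E_\lambda^i\}$ for $\MM_n(R)$, the isomorphism $E_\lambda^i\MM_n(R)E_\lambda^i\cong e_\lambda Re_\lambda$, and Proposition \ref{caracterCatArt} applied on both sides. Where you genuinely diverge is the hard direction of (ii). The paper, after dominating $f$ by $g=\sum_{i=1}^n e_i$ exactly as you do, stays on the ring side and goes through the socle: each $e_iRe_i$ being artinian forces $e_i\in\soc(R)$; since the socle is an ideal, $g\in\soc(R)$, whence $gRg$ is left artinian, and finally $fRf=f(gRg)f$ is left artinian because corners of left artinian rings are left artinian. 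You instead argue on the module side: $Rg=\bigoplus_{\lambda\in F}Re_\lambda$ is a finite direct sum of artinian $R$-modules by Proposition \ref{caracterCatArt}, hence artinian; $Re\subseteq Rg$ is then artinian as a submodule (note that this inclusion comes from $e=eg$ rather than $e=ge$, though you have both equalities); and Lemma \ref{equivArt} converts this into $eRe$ being an artinian ring. Your route is somewhat more economical: it needs only the standard closure of artinian modules under finite direct sums and submodules, plus Lemma \ref{equivArt} --- which is exactly where semiprimeness enters, as you observe --- whereas the paper's route additionally invokes socle theory for semiprime rings (an idempotent with artinian corner lies in the socle, and a corner at a socle idempotent is artinian). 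Both arguments hinge on the same key device, namely domination of an arbitrary idempotent by a finite sum of the $e_\lambda$'s via local units. A small point in your favour: the paper's proof of (i) explicitly verifies only the implication from $R$ to $\MM_n(R)$, while your use of Proposition \ref{caracterCatArt} as a two-sided characterization delivers both implications of the ``stable under taking matrices'' condition at once.
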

 \begin{proof}
Let $R$ be a semiprime ring with enough idempotents and take a complete set of idempotents $\{e_\lambda\}_{\lambda\in \Lambda}$ for $R$. 

(i). Suppose $R$ is categorically left artinian. We see that for any $n\in \mathbb N$ the ring $\mathbb M_n(R)$ is categorically left artinian.  It is immediate to see that $\mathbb M_n(R)$ is a semiprime ring. On the other hand, it is a ring with enough idempotents: define $E_\lambda^i$ as the matrix in $\mathbb M_n(R)$ having $e_\lambda$ in place $ii$. Then $\{E_\lambda^i \ \vert \ \lambda \in \Lambda, \ i \in \{1, \dotsc, n\}\}$ is a complete set of idempotents for $\mathbb M_n(R)$. By Proposition \ref{caracterCatArt} to show that $\mathbb M_n(R)$ is a categoricaly left artinian ring, it is enough to see that for any pair $(\lambda, i) \in \Lambda \times  \{1, \dotsc, n\}$ the ring
$E_\lambda^i \mathbb M_n(R)E_\lambda^i$ is left artinian. But this is trivially true as $E_\lambda^i \mathbb M_n(R)E_\lambda^i\cong e_\lambda R e_\lambda$, which is left artinian because $R$ is categorically left artinian and so Proposition \ref{caracterCatArt} can be applied.

(ii). Suppose first $R$ categorically left artinian and let $f$ be an idempotent in $R$. We have show that $fRf$ is categorically left artinian, equivalently (by Proposition \ref{caracterCatArt} applied to the case of a unital ring), $fRf$ is left artinian. By \cite[Proposition 5.20]{Leandro} the ring $R$ has local units; concretely, there exists a finite subset $\{e_1, \dotsc, e_n\} $ of $\{e_\lambda\}_{\lambda\in \Lambda}$ such that $f\in (\sum_{i=1}^n e_i) R (\sum_{i=1}^n e_i)$. 
Use again Proposition  \ref{caracterCatArt} to get that $e_iRe_i$ is a left artinian ring for every $i$. This means that $e_i$ is an idempotent in the socle of $R$. Since the socle is an ideal, this means that the idempotent $\sum_{i=1}^n e_i$ also belongs to the socle of $R$. This is equivalent to say that the corner $ (\sum_{i=1}^n e_i) R (\sum_{i=1}^n e_i)$ is left artinian. Since every corner of a left artinian ring is left artinian, $f (\sum_{i=1}^n e_i) R (\sum_{i=1}^n e_i) f = fRf$ is a left artinian ring. 

Now, suppose that for every idempotent $e$ in $R$ the ring $eRe$ is categorically left artinian (equivalently, it is left artinian). Then, in particular, for every $e_\lambda$, the corner $e_\lambda R e_\lambda$ is left artinian. By Proposition \ref{caracterCatArt} the ring $R$ is categorically left artinian. This shows (ii).
 \end{proof}
 
 \begin{theorem}\label{catartinian}
 For semiprime rings with enough idempotents, being categorically left/right artininian is a Morita invariant property.
 \end{theorem}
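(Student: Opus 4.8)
The plan is to invoke the general machinery of Theorem \ref{moritainvariant}, specialized to the subclass of semiprime rings with enough idempotents. The key observation is that Theorem \ref{moritainvariant} explicitly allows us to replace the class $\mathcal{I}$ of all idempotent rings by any subclass contained in $\mathcal{I}$, and rings with enough idempotents form exactly such a subclass (as noted just before the statement of \ref{locartinian}). Thus, to conclude that being categorically left/right artinian is Morita invariant for semiprime rings with enough idempotents, it suffices to verify the two hypotheses of Theorem \ref{moritainvariant}: that the property is stable by corners and stable under taking matrices.

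First I would check that the subclass is well-behaved under the two operations involved, namely that if $R$ is a semiprime ring with enough idempotents, then so are $\MM_n(R)$ and every corner $R_e$; this is precisely what the proof of Lemma \ref{EstcatArt} already establishes (the explicit complete set of matrix idempotents $\{E_\lambda^i\}$ witnesses that $\MM_n(R)$ has enough idempotents and remains semiprime, and corners inherit semiprimeness). Both of these closure facts are needed so that the inductive argument in Theorem \ref{moritainvariant} stays inside the class at every step.

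The substance of the verification is already contained in Lemma \ref{EstcatArt}: part (i) of that lemma gives stability under taking matrices, and part (ii) gives stability by corners, both for the class of semiprime rings with enough idempotents. Therefore the proof reduces to a direct application: I would simply state that, by Lemma \ref{EstcatArt}, the property of being categorically left/right artinian is stable by corners and under taking matrices within the class of semiprime rings with enough idempotents, and then invoke the version of Theorem \ref{moritainvariant} for subclasses of $\mathcal{I}$ to conclude Morita invariance.

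I do not anticipate a genuine obstacle here, since all the hard work was front-loaded into Lemma \ref{EstcatArt} and Proposition \ref{caracterCatArt}. The only point requiring mild care is the meta-level one: one must confirm that ``Morita equivalent'' is being interpreted within the subclass (i.e.\ a ring Morita equivalent to a semiprime ring with enough idempotents, and itself lying in the subclass, satisfies the property), which is exactly the notion of Morita invariance \emph{in} $\mathcal{C}$ from the definition preceding Theorem \ref{moritainvariant}. Given that, the proof is a two-line appeal to the two preceding results.

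\begin{proof}
By Lemma \ref{EstcatArt}, for semiprime rings with enough idempotents the property of being categorically left/right artinian is stable by corners and stable under taking matrices. Since the class of semiprime rings with enough idempotents is contained in the class $\mathcal{I}$ of idempotent rings, the version of Theorem \ref{moritainvariant} for subclasses of $\mathcal{I}$ applies and yields that being categorically left/right artinian is a Morita invariant property for this class.
\end{proof}
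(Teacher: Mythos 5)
Your proof is correct and is essentially identical to the paper's own argument: the paper also proves Theorem \ref{catartinian} by applying Lemma \ref{EstcatArt} together with Theorem \ref{moritainvariant} (in its version for subclasses of the class of idempotent rings). Your additional remarks about closure of the class under corners and matrices, and about the meaning of Morita invariance in a subclass, are sensible precautions but do not change the substance of the argument.
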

 \begin{proof} Apply Lemma \ref{EstcatArt} and Theorem \ref{moritainvariant}.
\end{proof}
\medskip

\emph{$I_0$ rings.}
\medskip
 
 A ring $R$ is said to be an $I_0$ \emph{ring} if every left ideal not contained in the Jacobson radical of $R$ contains a nonzero idempotent.
 
 \begin{lemma}\label{Icero}
  For rings with local units, the property of being an $I_0$ ring:
  \begin{enumerate}[\rm (i)]
\item Is stable under taking matrices.
\item Is stable by corners.
\end{enumerate}
 \end{lemma}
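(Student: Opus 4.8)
The plan is to prove both parts of Lemma \ref{Icero} for the class of rings with local units, relying on the characterization of $I_0$ rings in terms of idempotents surviving modulo the Jacobson radical, together with standard facts about how the Jacobson radical behaves under passage to corners and to matrix rings.

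\medskip

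\textbf{Part (ii): stability by corners.} First I would fix a ring $R$ with local units and an idempotent $e\in R$, and aim to show $R$ is an $I_0$ ring if and only if every corner $eRe$ is. For the forward direction, suppose $R$ is $I_0$ and let $L$ be a left ideal of $eRe$ not contained in $J(eRe)$. The key fact I would use is that the Jacobson radical localizes well at corners, namely $J(eRe)=eJ(R)e$ (this is standard for idempotents; see, e.g., the basic theory in \cite{L}). Then I would lift $L$ to the left ideal $RL$ of $R$ and argue that $RL\not\subseteq J(R)$: if it were contained, then $eRLe\subseteq eJ(R)e=J(eRe)$ would force $L=eLe\subseteq J(eRe)$, a contradiction. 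Since $R$ is $I_0$, $RL$ contains a nonzero idempotent $g$; the remaining work is to produce a nonzero idempotent inside $L\subseteq eRe$ itself. This is the delicate point, since multiplying $g$ by $e$ on both sides need not preserve idempotency. I would instead argue directly that $L$, being a left ideal of the corner $eRe$ not inside its radical, contains an element that is not quasi-regular in $eRe$, and use the idempotent-lifting available because $eRe$ is a subring in which $e$ acts as identity. For the converse, if every corner (and in particular $eRe$ for each local unit $e$) is $I_0$, and $L$ is a left ideal of $R$ not contained in $J(R)$, I would choose a local unit $e$ with some element $x\in L\setminus J(R)$ satisfying $x\in eRe$; then $eLe$ is a left ideal of $eRe$ not inside $J(eRe)=eJ(R)e$, hence contains a nonzero idempotent, which already lies in $L$.

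\medskip

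\textbf{Part (i): stability under taking matrices.} Here I would show $R$ is $I_0$ if and only if $\MM_n(R)$ is $I_0$ for every $n$. The essential input is again the behavior of the radical, namely $J(\MM_n(R))=\MM_n(J(R))$, valid for rings with local units by reduction to the unital corners via \cite[Proposition 5.20]{Leandro}. For the forward implication, given a left ideal $\mathcal{L}$ of $\MM_n(R)$ with $\mathcal{L}\not\subseteq \MM_n(J(R))$, some matrix in $\mathcal{L}$ has an entry outside $J(R)$; using that entry I would construct, via multiplication by matrix units $E_{ij}$, a nonzero element of $\mathcal{L}$ concentrated in a single corner position, whose $(1,1)$-entry generates a left ideal of $R$ outside $J(R)$, producing a nonzero idempotent of $R$ that I then reinsert as $gE_{11}\in\mathcal{L}$. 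The converse uses that $R$ embeds as the corner $E_{11}\MM_n(R)E_{11}$, so it follows from Part (ii) once that part is established.

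\medskip

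The main obstacle will be the forward direction of Part (ii): extracting a nonzero idempotent of the corner $eRe$ from the mere existence of a nonzero idempotent in the lifted ideal $RL$ of $R$. The cleanest route, which I expect to carry the proof, is to avoid lifting idempotents entirely and instead use the radical identity $J(eRe)=eJ(R)e$ to transfer the ``not quasi-regular'' witness between $R$ and $eRe$ directly, reducing the corner case to locating a non-quasi-regular element and then invoking the $I_0$ hypothesis on the ambient ring restricted appropriately. All remaining verifications are the routine radical computations $J(eRe)=eJ(R)e$ and $J(\MM_n(R))=\MM_n(J(R))$, which for rings with local units reduce to the unital case over the corners and so follow from the cited references.
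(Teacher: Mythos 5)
Your plan stalls exactly where you say it does, and the fallback you propose does not close the hole. In the forward direction of (ii), the $I_0$ hypothesis on $R$ produces idempotents only in left ideals \emph{of $R$}; a non-quasi-regular element of $L$ carries no idempotent with it, and converting non-quasi-regular elements of $eRe$ into idempotents of $eRe$ is precisely the $I_0$ property of $eRe$ you are trying to establish --- so ``invoking the $I_0$ hypothesis on the ambient ring restricted appropriately'' is circular as stated. The missing ingredient, on which the paper's proof runs, is the element-wise characterization in \cite[Lemma 1.1]{Nicholson}: $R$ is an $I_0$-ring if and only if every element outside $J(R)$ is von Neumann regular (it admits an inner inverse). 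Granting this, (ii) becomes a two-line transfer: if $exe\notin J(eRe)=eJ(R)e$ then $exe\notin J(R)$, so $exe=(exe)y(exe)$ for some $y\in R$, hence $exe=(exe)(eye)(exe)$ with $eye\in eRe$, and Lemma 1.1 applied to the unital ring $eRe$ finishes; the converse is the same transfer run backwards through a local unit, exactly as in the paper. No passage from an idempotent of $RL$ to an idempotent of $L$ is ever needed. (Incidentally, your original route was completable: any nonzero idempotent $g$ in the left ideal $R(exe)$ automatically satisfies $ge=g$, whence $eg=ege$ is again a nonzero idempotent and it lies in $(eRe)(exe)\subseteq L$; but you did not carry out this step, and it is the whole content of the forward direction.)

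Two further steps in your sketch fail for the same underlying reason (you manipulate idempotents where you should manipulate inner-inverse witnesses). In your converse of (ii), the nonzero idempotent you produce lies in $eLe$, but $eLe\not\subseteq L$: a left ideal is not closed under right multiplication by $e$, so ``which already lies in $L$'' is unjustified. In your forward direction of (i), $gE_{11}$ need not belong to $\mathcal{L}$: since $\mathcal{L}$ is only a left ideal you may multiply $A\in\mathcal{L}$ by matrix units on the left, which moves rows around but cannot isolate a single entry, and an idempotent $g$ of the left ideal $Ra_{kl}$ of $R$ gives no element of $\mathcal{L}$ at all. Both points are repairable with the witness formulation --- for instance, in (i), if $xa_{kl}x=x\neq 0$ then $X:=xE_{lk}$ satisfies $XAX=X$, whence $XA\in\mathcal{L}$ is a nonzero idempotent --- but as written they are gaps. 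The paper avoids (i) altogether by quoting \cite[Proposition 1.8]{Nicholson}, and proves (ii) purely element-wise via \cite[Lemma 1.1]{Nicholson} and $J(eRe)=eJ(R)e$, never touching left ideals directly.
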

 \begin{proof}
 (i) follows by  \cite[Proposition 1.8]{Nicholson}.
 
 (ii). Let $R$ be a ring with local units. Suppose first that $R$ is an $I_0$-ring. We have to show that for every idempotent $e$ in $R$ the corner $R_e$ is an $I_0$-ring. To this end, we will apply \cite[Lemma 1.1]{Nicholson} and will prove that  every element $exe$ not belonging to the Jacobson radical of $eRe$ is von Neumann regular. Indeed, for $exe$ such an element, since $J(eRe)=eJ(R)e$ (for $J(\ )$ the Jacobson radical) we have $exe\notin J(R)$; by \cite[Lemma 1.1]{Nicholson} $exe$ is von Neumann regular in $R$ and so in $eRe$.
 
 Now, assume that for every idempotent $e$ in $R$ the ring $eRe$ is an $I_0$-ring and show that $R$ is an $I_0$ ring. Take a nonzero $x\in R$ not contained in the Jacobson radical of $R$. Having $R$ local units implies that there exists an idempotent $f$ in $R$ such that $x\in fRf$. Reasoning as in the paragraph before we get that $x=fxf$ does not belong to the Jacobson radical of  $fRf$; apply that $fRf$ is an $I_0$-ring and \cite[Lemma 1.1]{Nicholson} to obtain that $x$ is von Neumann regular in $fRf$ and hence in $R$. This concludes our proof.
 \end{proof}

 \begin{theorem}\label{I0Mi}
 For  rings with local units, being an $I_0$-ring is a Morita invariant property.
 \end{theorem}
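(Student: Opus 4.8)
The final statement to prove is Theorem \ref{I0Mi}: for rings with local units, being an $I_0$-ring is a Morita invariant property. The plan is to invoke the general machinery already set up in this section, namely Theorem \ref{moritainvariant}, which reduces Morita invariance to two stability conditions: stability under taking matrices and stability by corners. Both of these have just been verified for the property of being an $I_0$-ring in Lemma \ref{Icero}, parts (i) and (ii) respectively. So the proof is essentially a one-line application.

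First I would check that the class of rings with local units is an admissible class for Theorem \ref{moritainvariant}. The theorem explicitly states that its conclusion holds not only for the full class $\mathcal{I}$ of idempotent rings but for any subclass contained in $\mathcal{I}$. Since rings with local units are idempotent (any element $x$ lies in $eRe$ for a suitable local unit $e$, so $x = exe \in R^2$, giving $R^2 = R$), this is a genuine subclass of $\mathcal{I}$, and the theorem applies. One should also note, for the corner-stability hypothesis to make sense within the class, that corners $eRe$ of rings with local units again have local units (indeed they are unital, with unit $e$), and that matrix rings $\MM_n(R)$ over a ring with local units again have local units; the latter is implicit in the proof of Lemma \ref{Icero}(i) and in the analogous constructions earlier in the section.

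The second and final step is to combine these ingredients. By Lemma \ref{Icero}, the property of being an $I_0$-ring is stable under taking matrices and stable by corners for rings with local units. By Theorem \ref{moritainvariant} (applied to the subclass of rings with local units), any property that is stable by corners and stable under taking matrices is Morita invariant in that class. Therefore being an $I_0$-ring is a Morita invariant property for rings with local units, as claimed.

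I do not anticipate any genuine obstacle here, since all the substantive work has been localized in Lemma \ref{Icero} (whose part (ii) carries the real content, characterizing membership outside the Jacobson radical via von Neumann regularity through \cite[Lemma 1.1]{Nicholson}) and in the general Theorem \ref{moritainvariant}. The only point requiring a moment's care is confirming that rings with local units form a subclass of the idempotent rings to which Theorem \ref{moritainvariant} legitimately applies; once that is observed, the proof is the immediate invocation \emph{Apply Lemma \ref{Icero} and Theorem \ref{moritainvariant}}, exactly parallel to the proofs of Theorems \ref{locartinian} and \ref{catartinian}.
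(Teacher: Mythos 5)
Your proof is correct and follows exactly the same route as the paper, which likewise proves the theorem by applying Lemma \ref{Icero} together with Theorem \ref{moritainvariant} (using that rings with local units form a subclass of the idempotent rings). Your additional check that this subclass is admissible is a sensible verification the paper leaves implicit.
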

 \begin{proof} Apply Lemma \ref{Icero} and Theorem \ref{moritainvariant}.
\end{proof}
\medskip

\emph{Properly purely infinite rings.}
\medskip

Another interesting property of rings is properly purely infiniteness, notion introduced in \cite{AGPS}. 
Recall that a ring $R$ is \emph{properly purely infinite} if every nonzero element is properly infinite (see \cite{AGPS} for the definitions and for an account on the definitions and results concerning these properties).

To finish this section we show that it is a Morita invariant property for rings with local units. As before, for our proof we rely on the facts that being properly purely infinite is invariant under taking matrices and by corners. These results contrast with the analogues for purely infinite rings as, in general, matrix rings over purely infinite rings need not be purely infinite, since otherwise pure infiniteness and strong pure infiniteness would be the same (recall \cite[Lemma 3.4 and Remark 3.5]{AGPS}). However, it was proved in  \cite[Theorem 5.15]{AGPS} that the property of being purely infinite and exchange is Morita invariant.

 \begin{lemma}\label{ppi}
For rings with local units, the property of being properly purely infinite:
\begin{enumerate}[\rm (i)]
\item Is stable under taking matrices.
\item Is stable by corners.
\end{enumerate}
 \end{lemma}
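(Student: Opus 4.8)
The plan is to verify both stability statements by working at the level of a single element, using the elementwise description of proper infiniteness. Recall from \cite{AGPS} that a nonzero $a\in R$ is properly infinite exactly when $a\oplus a\precsim a$ in $\MM_2(R)$; unwinding this condition, $a$ is properly infinite if and only if there exist $x_1,x_2,y_1,y_2\in R$ with $x_iay_j=\delta_{ij}\,a$ for $i,j\in\{1,2\}$. Since this is a finite system of equations involving only multiplications, it transfers along the ring inclusions $eRe\subseteq R$ and $\MM_n(eRe)\subseteq\MM_n(R)$. I would establish (ii) first and then use it, together with a reduction to the unital case, to deduce (i).

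For (ii), first suppose $R$ is properly purely infinite and let $e=e^2\in R$. Given $0\neq a=eae\in eRe$, proper infiniteness of $a$ in $R$ yields $x_1,x_2,y_1,y_2\in R$ with $x_iay_j=\delta_{ij}a$. Conjugating by $e$ and setting $x_i'=ex_ie$, $y_j'=ey_je\in eRe$, one checks using $ea=ae=a$ that $x_i'ay_j'=e(x_iay_j)e=\delta_{ij}\,eae=\delta_{ij}a$; hence $a$ is properly infinite in $eRe$, so $eRe$ is properly purely infinite. Conversely, assume every corner of $R$ is properly purely infinite and take $0\neq a\in R$. Because $R$ has local units, there is an idempotent $e$ with $a\in eRe$; the corner $eRe$ being properly purely infinite provides witnesses $x_i',y_j'\in eRe$ with $x_i'ay_j'=\delta_{ij}a$, and these equalities hold verbatim in $R$. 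Thus $a$ is properly infinite in $R$, proving (ii).

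For (i), the implication ``$\MM_n(R)$ properly purely infinite for all $n$ $\Rightarrow$ $R$ properly purely infinite'' is immediate by taking $n=1$. For the reverse, let $R$ be properly purely infinite and fix $n$ and a nonzero $A=(a_{ij})\in\MM_n(R)$. Using the local units of $R$, choose an idempotent $e\in R$ with all entries $a_{ij}\in eRe$; then $A\in\MM_n(eRe)=F\,\MM_n(R)\,F$, where $F=\mathrm{diag}(e,\dots,e)$, and $eRe$ is a \emph{unital} properly purely infinite ring by part (ii). Any witnesses establishing proper infiniteness of $A$ inside $\MM_n(eRe)$ are simultaneously witnesses inside $\MM_n(R)$, so it suffices to prove that $\MM_n(T)$ is properly purely infinite for a unital properly purely infinite ring $T$. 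This is the substantive point and is where I expect the main obstacle to lie: whereas $\precsim$ is compatible with direct sums (so that a \emph{diagonal} matrix $\mathrm{diag}(a_1,\dots,a_n)$ with each $a_k$ properly infinite satisfies $\mathrm{diag}(a_1,\dots,a_n)\oplus\mathrm{diag}(a_1,\dots,a_n)\precsim\mathrm{diag}(a_1,\dots,a_n)$), a general matrix $A$ need not be comparable to any single one of its entries, since distinct entries may generate incomparable ideals.

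Consequently one cannot reduce $A$ to a single properly infinite element by a naive $\precsim$-domination argument; the correct route is the ideal-sensitive analysis of proper infiniteness carried out in \cite{AGPS}, which shows that proper pure infiniteness is detected by a condition on the associated monoid that is invariant under passage to matrix rings. I would therefore invoke the corresponding result of \cite{AGPS} to conclude that $\MM_n(T)$ is properly purely infinite, completing (i). In summary, (ii) is elementary and self-contained via the elementwise criterion, while (i) hinges on the unital matrix-stability of proper pure infiniteness, which I would cite from \cite{AGPS} rather than reprove.
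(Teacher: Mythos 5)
Your proof is correct, and its overall architecture matches the paper's: the converse direction of (ii) is the same local-units argument, and the substantive matrix-stability statement is in both cases delegated to \cite{AGPS} rather than reproved. The differences are worth recording. For (ii), where the paper simply quotes \cite[Proposition 5.2]{AGPS} for the passage from $R$ down to a corner $eRe$, you reprove it in one line from the elementwise witnesses $x_iay_j=\delta_{ij}a$ by conjugating them with $e$; this has the side benefit of making explicit a point the paper leaves implicit, namely that proper infiniteness of an element of $eRe$ computed in $eRe$ and computed in $R$ agree (your witnesses transfer verbatim in both directions). For (i), where the paper cites \cite[Proposition 5.4]{AGPS} outright, you first reduce an arbitrary nonzero $A\in\MM_n(R)$ to the unital corner case via local units, the identification $\MM_n(eRe)=F\,\MM_n(R)\,F$ with $F=\mathrm{diag}(e,\dots,e)$, and part (ii), and only then invoke matrix stability for \emph{unital} properly purely infinite rings; note that the result you need at that point is exactly \cite[Proposition 5.4]{AGPS}, the same citation the paper uses, and your closing description of how that proposition is proved (via a monoid condition) is inessential to the argument, since you cite rather than reprove it. What your route buys is a self-contained treatment of (ii), independence from the non-unital formulation of the AGPS matrix result, and an honest identification of where the genuine difficulty sits --- your observation that a matrix need not be $\precsim$-comparable to any single entry is precisely why matrix stability is not formal; what the paper's route buys is brevity, since the cited propositions of \cite{AGPS} are already stated in the generality required for rings with local units.
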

 \begin{proof}
 (i) follows by  \cite[Proposition 5.4]{AGPS}.
 
 (ii). Let $R$ be a ring with local units. If it is properly purely infinite then every corner is properly purely infinite, as follows from \cite[Proposition 5.2]{AGPS}. Now, assume that every corner of $R$ is properly purely infinite and show that $R$ is also properly purely infinite. Take $x$ in $R$ and let $e$ be an idempotent such that $x\in eRe$. By the hypothesis, $eRe$ is properly purely infinite hence, by definition, every element of $eRe$ (in particular $x$) is properly infinite. This shows the claim again by the definition of properly purely infinite ring.
\end{proof}

 \begin{theorem}\label{ppiMI}
 For  rings with local units, being properly purely infinite is a Morita invariant property.
 \end{theorem}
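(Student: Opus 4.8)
The plan is to invoke the general machinery established in Theorem \ref{moritainvariant} together with the two stability results just proved in Lemma \ref{ppi}. Theorem \ref{moritainvariant} states that a property $P$ is Morita invariant for idempotent rings (or for any subclass of idempotent rings) provided $P$ is stable by corners and under taking matrices. Since rings with local units form a subclass of the class of idempotent rings, the theorem applies verbatim to this subclass once we verify the two stability conditions for the property of being properly purely infinite.

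First I would observe that the class of rings with local units is indeed contained in the class of idempotent rings, as noted earlier in the excerpt, so the final clause of Theorem \ref{moritainvariant} (allowing the class $\mathcal{I}$ of idempotent rings to be replaced by any subclass) legitimately covers rings with local units. Second, Lemma \ref{ppi}(i) gives exactly that being properly purely infinite is stable under taking matrices, and Lemma \ref{ppi}(ii) gives that it is stable by corners. These are precisely the two hypotheses required by Theorem \ref{moritainvariant}. Applying that theorem to the subclass of rings with local units then yields immediately that being properly purely infinite is Morita invariant for rings with local units.

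There is no real obstacle here: the statement is a direct specialization of Theorem \ref{moritainvariant} once the stability lemmas are in hand, following the identical pattern used to deduce Theorems \ref{locartinian}, \ref{catartinian}, and \ref{I0Mi}. The only point meriting a moment's care is ensuring that the class under consideration is closed under the operations invoked (passing to corners and to matrix rings) and remains within the ambient class of idempotent rings so that the subclass version of Theorem \ref{moritainvariant} applies; this is routine since corners of rings with local units inherit local units and matrix rings over rings with local units again have local units. The proof therefore reduces to a single sentence.

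\begin{proof} Apply Lemma \ref{ppi} and Theorem \ref{moritainvariant}.
\end{proof}
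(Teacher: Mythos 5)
Your proposal is correct and is essentially identical to the paper's own proof, which also consists of the single line ``Apply Lemma \ref{ppi} and Theorem \ref{moritainvariant}.'' Your additional remarks verifying that rings with local units form a subclass of idempotent rings (so the subclass clause of Theorem \ref{moritainvariant} applies) simply make explicit what the paper leaves implicit.
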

 \begin{proof} Apply Lemma \ref{ppi} and Theorem \ref{moritainvariant}.
\end{proof}
\medskip


\section{Applications in the context of Leavitt path algebras}

In this section we will show that some properties related to the underlying graphs remain invariant by Morita equivalencies among Leavitt path algebras. We start with the essentials on Leavitt path algebras.
\medskip

A \emph{directed graph} is a 4-tuple $E=(E^0, E^1, r_E, s_E)$ consisting of two disjoint sets $E^0$, $E^1$ and two maps
$r_E, s_E: E^1 \to E^0$. The elements of $E^0$ are called the \emph{vertices} of $E$ and the elements of $E^1$ the \emph{edges} of $E$ while for
$e\in E^1$, $r_E(e)$ and $s_E(e)$ are called the \emph{range} and the \emph{source} of $e$, respectively. If there is no confusion with respect to the graph we are considering, we simply write $r(e)$ and $s(e)$.

Given a (directed) graph $E$ and a field $K$, the {\it path $K$-algebra} of $E$,
denoted by $KE$ is defined as the free associative $K$-algebra generated by the
set of paths of $E$ with relations:
\begin{enumerate}
\item[(V)] $vw= \delta_{v,w}v$ for all $v,w\in E^0$.
\item [(E1)] $s(e)e=er(e)=e$ for all $e\in E^1$.
\end{enumerate}

 If
$s^{-1}(v)$ is a finite set for every $v\in E^0$, then the graph is called \emph{row-finite}. If
$E^0$ is finite and $E$ is row-finite, { then} $E^1$ must necessarily be finite as well; in this case we
say simply that $E$ is \emph{finite}.

A vertex which emits no edges is called a \emph{sink}. 
A vertex $v$ is called an \emph{infinite emitter} if $s^{-1}(v)$ is an infinite set, and a \emph{regular vertex} otherwise. 
The  set of infinite emitters {will} be denoted by $ E_{inf}^0$ while ${\rm Reg}(E)$ will denote the set of regular vertices.

The  {\it extended graph of} $E$ is defined as the new graph $\widehat{E}=(E^0,E^1\cup (E^1)^*, r_{\widehat{E}}, s_{\widehat{E}}),$ where
$(E^1)^*=\{e_i^* \ | \ e_i\in  E^1\}$ and the functions $r_{\widehat{E}}$ and $s_{\widehat{E}}$ are defined as 
$${r_{\widehat{E}}}_{|_{E^1}}=r,\ {s_{\widehat{E}}}_{|_{E^1}}=s,\
r_{\widehat{E}}(e_i^*)=s(e_i), \hbox{ and }  s_{\widehat{E}}(e_i^*)=r(e_i).$$

\noindent
The elements of $E^1$ will be called \emph{real edges}, while for $e\in E^1$ we will call $e^\ast$ a
\emph{ghost edge}.    
\medskip

The {\it Leavitt path algebra of} $E$ {\it with coefficients in} $K$, denoted $L_K(E)$, is the quotient of the path algebra $K\widehat{E}$ by the ideal of $K\widehat{E}$ generated by the relations:

\begin{enumerate}    
\item[(CK1)] $e^*e'=\delta _{e,e'}r(e) \ \mbox{ for all } e,e'\in E^1$.
\item[(CK2)] $v=\sum _{\{ e\in E^1\mid s(e)=v \}}ee^* \ \ \mbox{ for every}\ \ v\in  {\rm Reg}(E).$
\end{enumerate}

Observe that in $K\widehat{E}$ the relations (V) and (E1) remain valid and that the following is also satisfied:

\begin{enumerate}
\item [(E2)] $r(e)e^*=e^*s(e)=e^*$ for all $e\in E^1$.
\end{enumerate}
\medskip

 Note that if $E$ is a finite graph, then
$L_{K}(E)$ is unital with $\sum _{v\in E^0} v=1_{L_{K}(E)}$; otherwise, $L_{K}(E)$
is a ring with a set of local units consisting of sums of distinct vertices and that
since every Leavitt path algebra $L_{K}(E)$ has
local units, it is the directed union of its corners.

A \emph{path} $\mu$ in a graph $E$ is a finite sequence of edges $\mu=e_1\dots e_n$
such that $r(e_i)=s(e_{i+1})$ for $i=1,\dots,n-1$. In this case, $s(\mu):=s(e_1)$ and $r(\mu):=r(e_n)$ are the
\emph{source} and \emph{range} of $\mu$, respectively, and $n$ is the \emph{length} of $\mu$. We also say that
$\mu$ is \emph{a path from $s(e_1)$ to $r(e_n)$} and denote by $\mu^0$ the set of its vertices, i.e.,
$\mu^0:=\{s(e_1),r(e_1),\dots,r(e_n)\}$. By $\mu^1$ we denote the set of edges appearing in $\mu$, i.e., $\mu^1:=\{e_1,\dots, e_n\}$.

We view the elements of $E^{0}$ as paths of length $0$. The set of all paths of a graph $E$ is denoted by ${\rm Path}(E)$.
The Leavitt path algebra $L_{K}(E)$ is a
$\mathbb{Z}$-graded $K$-algebra, spanned as a $K$-vector space by
$\{\alpha\beta^{\ast } \ \vert \ \alpha, \beta \in {\rm Path}(E)\}$. In particular, for each $n\in\mathbb{Z}$,
the degree $n$ component $L_{K}(E)_{n}$ is spanned by the set 
$\{\alpha \beta^{\ast }\ \vert \  \alpha, \beta \in {\rm Path}(E)\ \hbox{and}\  \mathrm{length}(\alpha)-\mathrm{length}(\beta)=n\}$. \smallskip

If $\mu$ is a path in $E$, and if $v=s(\mu)=r(\mu)$, then $\mu$ is called a \emph{closed path based
at $v$}. If $s(\mu)=r(\mu)$
and $s(e_i)\neq s(e_j)$ for every $i\neq j$, then $\mu$ is called a \emph{cycle}. 
 A \emph{closed simple path based at a vertex $v$} is a path $\mu = e_1\cdots e_t$ such that $s(\mu)=r(\mu)=v$ and $s(e_i)\neq v$ for all $2\leq i \leq t$.
For $\mu = e_1 \dots e_n\in {\rm Path}(E)$ we write $\mu^*$ for the element $e_n^* \dots e_1^*$ of $L_{K}(E)$.

An edge $e$ is an {\it exit} for a path $\mu = e_1 \dots e_n$ if there exists $i\in \{1, \dots, n\}$ such that
$s(e)=s(e_i)$ and $e\neq e_i$. We say that $E$ satisfies \emph{Condition} (L) if every cycle in
$E$ has an exit and we say that $E$ satisfies  \emph{Condition} (K)  if for each vertex $v$ on a closed simple 
path there exist at least two distinct closed simple paths based at $v$
\medskip

\begin{definition}{\rm
We will say that a property on graphs is \emph{Morita invariant} if given two Leavitt path algebras $L_K(E)$ and $L_K(F)$
which are Morita equivalent (as idempotent rings) then $E$ satisfies that property if and only if $F$ satisfies the same property.
}
\end{definition}

The result that follows has been proved in \cite[ Proposition 3.1.6]{AAS}.

\begin{proposition}\label{Cond(L)book}
Let $K$ be any field and $E$ any graph. The following are equivalent conditions:
\begin{enumerate}[\rm (i)]
\item E satisfies Condition {\rm (L)}.
\item Every nonzero ideal of the Leavitt path algebra $L_K(E)$ contains a nonzero idempotent.
\item Every nonzero left ideal of the Leavitt path algebra $L_K(E)$ contains a nonzero idempotent.
\end{enumerate}
\end{proposition}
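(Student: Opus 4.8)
The plan is to prove the cycle of implications (i) $\Rightarrow$ (iii) $\Rightarrow$ (ii) $\Rightarrow$ (i). The implication (iii) $\Rightarrow$ (ii) is immediate, since every nonzero two-sided ideal is in particular a nonzero left ideal. The two substantial directions are (i) $\Rightarrow$ (iii) and its converse (ii) $\Rightarrow$ (i), and for both the decisive structural input is the behaviour of cycles without exits.

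For (i) $\Rightarrow$ (iii), the main tool I would invoke is the Reduction Theorem for Leavitt path algebras: for every nonzero $a \in L_K(E)$ there exist paths $\mu, \nu$ with $r(\mu) = r(\nu) =: v$ such that $\mu^* a \nu$ is either a nonzero scalar multiple $kv$ of a vertex, or a nonzero Laurent polynomial $p(c, c^*)$ in a cycle $c$ without exits based at $v$. Under Condition (L) there are no cycles without exits, so the second alternative cannot occur, and one always obtains $\mu^* a \nu = kv$ with $k \in K^\times$. Now let $I$ be a nonzero left ideal and pick $0 \neq a \in I$. Since $I$ is a left ideal, $b := \mu^* a \in I$, and $b\nu = kv$. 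Setting $z := \tfrac1k \nu$, the element $e := zb = \tfrac1k \nu\mu^* a$ again lies in $I$, and I claim it is a nonzero idempotent. Indeed $bz = \tfrac1k b\nu = v$, hence $e^2 = z(bz)b = zvb = zb = e$ (using $r(\nu)=v$, so that $zv = z$); and if $e$ were $0$ then $v = v^2 = b(zb)z = 0$, a contradiction. This produces the required nonzero idempotent inside the one-sided ideal $I$.

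For (ii) $\Rightarrow$ (i) I would argue by contraposition. If $E$ fails Condition (L), fix a cycle $c$ without exits based at a vertex $v$. The relevant structural fact is that the ideal generated by $v$ is isomorphic to a matrix ring $\mathbb{M}_\Lambda(K[x,x^{-1}])$ over the Laurent polynomial ring, under an isomorphism carrying $v$ to a diagonal idempotent and $c$ to the corresponding copy of $x$. I would then consider the two-sided ideal $J$ of $L_K(E)$ generated by $c - v$. Writing $c - v = v(c-v)v$ shows $J = I(c-v)I$, which under the isomorphism is exactly the set of matrices with entries in the maximal ideal $(x-1)$ of $K[x,x^{-1}]$; in particular $J$ is nonzero. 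Finally, $J$ contains no nonzero idempotent: any idempotent in $J$ has finite support and hence lies in $\mathbb{M}_n((x-1))$ for some $n$, and localizing $K[x,x^{-1}]$ at the maximal ideal $(x-1)$ turns this into an idempotent of $\mathbb{M}_n$ over a commutative local ring all of whose entries lie in the maximal ideal, forcing it to be $0$. Thus $J$ witnesses the failure of (ii).

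The step I expect to be the main obstacle is making (ii) $\Rightarrow$ (i) fully rigorous: pinning down the isomorphism between the ideal $\langle v\rangle$ and the matrix ring $\mathbb{M}_\Lambda(K[x,x^{-1}])$, and verifying that $J$ is genuinely a two-sided ideal of all of $L_K(E)$ and not merely of $\langle v\rangle$, both require the detailed structure theory of Leavitt path algebras of graphs containing a cycle without exits. The idempotent-freeness of $J$ is then a clean commutative-algebra argument. By comparison, the direction (i) $\Rightarrow$ (iii) is essentially a formal consequence of the Reduction Theorem, once the small left-ideal trick above is in place.
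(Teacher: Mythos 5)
The paper offers no internal proof of this proposition at all: it is quoted from the literature with a citation to \cite[Proposition 3.1.6]{AAS}, and your argument is correct and follows essentially the same standard route as that cited source. Specifically, the Reduction Theorem settles (i) $\Rightarrow$ (iii) (your trick of forming $e=\tfrac{1}{k}\nu\mu^{*}a$, which stays inside the left ideal, is exactly the right way to handle one-sidedness, and your nonvanishing argument $v=b(zb)z$ is sound), (iii) $\Rightarrow$ (ii) is trivial, and the contrapositive of (ii) $\Rightarrow$ (i) rests, as in the cited source, on the standard structure theorem identifying the ideal generated by a cycle without exits with $\mathbb{M}_{\Lambda}(K[x,x^{-1}])$, after which your local-ring argument (an idempotent matrix $P$ with entries in the maximal ideal satisfies $(I-P)P=0$ with $I-P$ invertible, hence $P=0$) correctly shows the ideal generated by $c-v$ is nonzero yet contains no nonzero idempotent.
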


\begin{corollary}\label{LesI0}
A graph $E$ satisfies Condition {\rm (L)} if and only if for any field $K$ the Leavitt path algebra $L_K(E)$ is an $I_0$-ring.
\end{corollary}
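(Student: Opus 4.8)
The plan is to reduce the statement to the equivalence (i) $\Leftrightarrow$ (iii) of Proposition \ref{Cond(L)book} by exploiting that the Jacobson radical of a Leavitt path algebra is trivial. First I would recall that for any field $K$ and any graph $E$ one has $J(L_K(E))=0$, a standard fact that can be cited from \cite{AAS}. With this in hand, a left ideal of $L_K(E)$ fails to be contained in the Jacobson radical precisely when it is nonzero, so the defining condition of an $I_0$-ring (every left ideal not contained in $J(L_K(E))$ contains a nonzero idempotent) specializes here to the statement that every \emph{nonzero} left ideal of $L_K(E)$ contains a nonzero idempotent. But that is exactly condition (iii) of Proposition \ref{Cond(L)book}.

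Granting the vanishing of the radical, both implications are then immediate. If $E$ satisfies Condition (L), then by Proposition \ref{Cond(L)book} every nonzero left ideal of $L_K(E)$ contains a nonzero idempotent; since the left ideals not contained in $J(L_K(E))=0$ are precisely the nonzero ones, $L_K(E)$ is an $I_0$-ring. Conversely, if $L_K(E)$ is an $I_0$-ring, then every nonzero left ideal—being not contained in the zero radical—contains a nonzero idempotent, and the equivalence (iii) $\Leftrightarrow$ (i) of Proposition \ref{Cond(L)book} returns Condition (L).

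The only genuine input beyond Proposition \ref{Cond(L)book} is the identity $J(L_K(E))=0$; accordingly, the ``hard part'' is not a computation but ensuring this fact is invoked with the correct reference, after which the argument is a direct translation between the $I_0$ condition and condition (iii) of the proposition.
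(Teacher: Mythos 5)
Your proof is correct, and it reaches Proposition \ref{Cond(L)book} by a different bridge than the paper does. The paper's proof is a single line: ``Use Proposition \ref{Cond(L)book} and \cite[Lemma 1.1]{Nicholson}''; that is, it connects the $I_0$ property with condition (iii) of the proposition via Nicholson's element-wise characterization of $I_0$-rings (tying elements outside the Jacobson radical to von Neumann regularity, in the same way the paper uses that lemma in Lemma \ref{Icero}). You instead invoke semiprimitivity of Leavitt path algebras, $J(L_K(E))=0$, after which the $I_0$ condition becomes word-for-word condition (iii) and nothing further is required. Your route buys complete transparency at the price of importing one nontrivial (but standard, and indeed available in \cite{AAS}) theorem about Leavitt path algebras. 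Two remarks on the comparison. First, for the direction ``Condition (L) implies $I_0$'' no radical-vanishing fact is needed at all: any left ideal not contained in $J(L_K(E))$ is in particular nonzero, so condition (iii) yields the $I_0$ property straight from the definition; in fact condition (iii) by itself already forces $J(L_K(E))=0$, since the radical is a left ideal containing no nonzero idempotents. Second, for the converse direction some control on the radical is genuinely indispensable, because ``$I_0$ implies every nonzero left ideal contains a nonzero idempotent'' is false for general rings: a nonzero left ideal sitting inside a nonzero radical is invisible to the $I_0$ condition (e.g. $\mathbb{Z}/4\mathbb{Z}$ is an $I_0$-ring, yet its radical is a nonzero ideal containing no nonzero idempotent). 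The paper's terse citation of \cite[Lemma 1.1]{Nicholson} leaves that input implicit, whereas your proof supplies it explicitly, which is arguably a gain in rigor.
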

\begin{proof}
Use Proposition \ref{Cond(L)book}  and \cite[Lemma 1.1]{Nicholson}.
\end{proof}
\begin{theorem}\label{LandK}
 Conditions {\rm(K)} and {\rm(L)} are Morita invariant.
 \end{theorem}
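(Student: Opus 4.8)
The plan is to reduce each graph condition to an intrinsic, Morita invariant ring-theoretic property of the associated Leavitt path algebra and then to invoke the transfer results of Section 2. I will use throughout that every Leavitt path algebra has a set of local units, so that it is in particular an idempotent ring, and that Morita equivalence is a symmetric relation, so that transferring a property in one direction already yields the desired ``if and only if''. For Condition (L) the intrinsic reformulation is handed to us by Corollary \ref{LesI0}: a graph $E$ satisfies Condition (L) if and only if $L_K(E)$ is an $I_0$-ring. Since being an $I_0$-ring is Morita invariant for rings with local units by Theorem \ref{I0Mi}, and Leavitt path algebras have local units, a Morita equivalence between $L_K(E)$ and $L_K(F)$ forces $L_K(E)$ to be an $I_0$-ring exactly when $L_K(F)$ is; hence $E$ satisfies Condition (L) if and only if $F$ does.

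For Condition (K) I would proceed in the same spirit, with the exchange property playing the role that being an $I_0$-ring played for Condition (L). The key input here is the known characterization that $L_K(E)$ is an exchange ring if and only if $E$ satisfies Condition (K) (see \cite{AAS}). Granting this, the argument closes exactly as before: by Example \ref{EjemploIntercambio}, which rests ultimately on \cite[Theorem 2.1]{AGS}, the exchange property is Morita invariant for idempotent rings, so a Morita equivalence between the idempotent rings $L_K(E)$ and $L_K(F)$ implies that $L_K(E)$ is an exchange ring precisely when $L_K(F)$ is, whence $E$ satisfies Condition (K) if and only if $F$ does.

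The crux of the whole argument is the availability of a Morita invariant, purely ring-theoretic reformulation of each graph-theoretic condition; once both reformulations are in hand, the Morita invariance statements of Section 2 together with the observation that Leavitt path algebras are idempotent rings with local units assemble into the theorem with essentially no further computation. On the Condition (L) side the remaining work is mere bookkeeping: one only checks that the local-units hypothesis of Theorem \ref{I0Mi} is met and that symmetry of Morita equivalence delivers the biconditional. The genuinely nontrivial ingredient sits on the Condition (K) side, namely the identification of Condition (K) with the exchange property of $L_K(E)$; this is the step I expect to be the main obstacle, and I would quote it as an established result rather than reprove it.
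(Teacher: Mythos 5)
Your proposal is correct and follows essentially the same route as the paper: Condition (L) is handled via Corollary \ref{LesI0} together with Theorem \ref{I0Mi}, and Condition (K) via the known characterization of Condition (K) by the exchange property of $L_K(E)$ together with the Morita invariance of the exchange property for idempotent rings from \cite[Theorem 2.1]{AGS}. The only cosmetic difference is the citation for the exchange characterization (the paper quotes \cite[Theorem 4.5]{APS1} and \cite[Theorem 4.2]{G} rather than \cite{AAS}), which does not affect the argument.
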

\begin{proof}
Concerning Condition (K), it has been proved that a graph $E$ satisfies Condition (K) if and only if the Leavitt path algebra $L_K(E)$ is an exchange ring (see \cite[Theorem 4.5]{APS1} for the row-finite case and \cite[Theorem 4.2]{G} for the arbitrary case). Since the exchange property is Morita invariant for idempotent rings (see  \cite[Theorem 2.1]{AGS}), we get that Condition (K) is Morita invariant.

That Condition (L) is Morita invariant  follows by Corollary \ref{LesI0} and Theorem \ref{I0Mi}. 
\end{proof}
\medskip

A graph $E$ is said to be \emph{cofinal} if there are no more hereditary and saturated subsets in $E$ than $E^0$ and $\emptyset$. 

\begin{remark}\label{cofin}{\rm
This is equivalent to say that for any field $K$ the Leavitt path algebra $L_K(E)$ is graded simple, as was shown in  \cite[Lemma 2.8]{APS1}.
}\end{remark}
\medskip

Recall that given a $G$-graded ring $R=\oplus_{g\in G}R_g$, for $G$ a group, an ideal $I$ is said to be graded if for any $y\in I$, if $y=\sum_{g\in G}y_g$, then $y_g\in I$ for all $g\in G$. In this case,  $I_g$  will denote $I\cap R_g$.

\begin{remark}{\rm
 It was also proved in \cite[Theorem 4.5]{APS1}, in the row-finite case, and in \cite[Theorem 3.8]{G} in general, that all ideals in a Leavitt path algebra $L_K(E)$ are graded if and only if the graph $E$ satisfies Condition (K). This result, jointly with Theorem \ref{LandK} imply that cofinality is a Morita invariant property. We finish this section by showing in a different way that cofinality is Morita invariant.
}\end{remark}

The following result was settled in \cite[Proposition 3.5]{GarciSimon}. Using similar techniques it can be stablished for graded rings. 

\begin{proposition}\label{isoRet}
Let $G$ be an abelian group, and let $R$ and $S$ be two $G$-graded rings which are idempotent and Morita equivalent. Denote by
$\mathcal{L}_{gr}(R)$ the lattice of graded ideals $I$ of $R$ such that $I_gR_hI_k=I_{ghk}$, and similarly for $S$. Then the lattices $\mathcal{L}_{gr}(R)$ and $\mathcal{L}_{gr}(S)$ are isomorphic.
\end{proposition}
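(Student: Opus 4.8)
The plan is to adapt the proof of \cite[Proposition 3.5]{GarciSimon} to the graded setting, transporting graded ideals across a \emph{graded} surjective Morita context and then checking that the defining relation of $\mathcal{L}_{gr}$ is exactly what makes the transport a bijection. First I would fix a graded surjective Morita context $(R, S, N, M, \varphi, \psi)$ realizing the graded Morita equivalence: here $N$ is a graded $(R,S)$-bimodule, $M$ a graded $(S,R)$-bimodule, and $\varphi, \psi$ are homogeneous of degree $e$ (the identity of $G$), so that $N_gM_h \subseteq R_{gh}$, $M_gN_h \subseteq S_{gh}$, while surjectivity reads $NM = R$ and $MN = S$, in fact degreewise.

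Next I would introduce the transport maps
$$\Phi\colon \mathcal{L}_{gr}(R)\longrightarrow \mathcal{L}_{gr}(S),\qquad I\longmapsto MIN:=\psi(M\otimes_R I\otimes_R N),$$
$$\Psi\colon \mathcal{L}_{gr}(S)\longrightarrow \mathcal{L}_{gr}(R),\qquad J\longmapsto NJM:=\varphi(N\otimes_S J\otimes_S M).$$
Because $M$, $I$, $N$ are graded and $\varphi, \psi$ are homogeneous, $MIN$ is a graded ideal of $S$ with homogeneous components $(MIN)_t=\sum_{abc=t}M_aI_bN_c$, the ideal property coming from $SM\subseteq M$, $MR\subseteq M$, $RN\subseteq N$ and $NS\subseteq N$. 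The first thing to verify is that $\Phi(I)$ really lands in $\mathcal{L}_{gr}(S)$, i.e.\ that it satisfies $(MIN)_gS_h(MIN)_k=(MIN)_{ghk}$: the inclusion $\subseteq$ holds for every graded ideal, and for $\supseteq$ one expands $S_h=(MN)_h$, pushes the inner factors through $\varphi$ and $\psi$, and invokes the defining relation on $I$ together with the degreewise surjectivity. It is here that the hypothesis that $G$ is abelian enters, allowing one to rearrange the group degrees so that the relation $I_gR_hI_k=I_{ghk}$ applies.

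The heart of the argument is that $\Phi$ and $\Psi$ are mutually inverse. Using $NM=R$ and $MN=S$ one simplifies the composites $\Psi\circ\Phi$ and $\Phi\circ\Psi$ to two-sided products of $I$ (resp.\ $J$) with copies of $R$ (resp.\ $S$), and the defining relation of $\mathcal{L}_{gr}$ is engineered precisely so that these products collapse back to $I$ (resp.\ $J$); again the commutativity of $G$ is used to match up the degrees. Since both maps are visibly inclusion-preserving and mutually inverse, they are order isomorphisms between $\mathcal{L}_{gr}(R)$ and $\mathcal{L}_{gr}(S)$, hence lattice isomorphisms, as an order isomorphism of lattices automatically preserves meets and joins.

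I expect the main obstacle to be the graded bookkeeping rather than any conceptual difficulty: one must first be sure that the Morita equivalence can be realized by a context in which $N$, $M$ are graded and $\varphi, \psi$ are homogeneous, and then carry out the component-wise computations showing that $\Phi(I)$ obeys the defining relation of $\mathcal{L}_{gr}(S)$ and that the composites are the identity. This is exactly where degreewise surjectivity, the relation $I_gR_hI_k=I_{ghk}$, and the commutativity of $G$ must be combined with care; once well-definedness is in hand, the remaining lattice-theoretic conclusions are routine.
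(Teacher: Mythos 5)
Your strategy---transporting graded ideals through a surjective Morita context via $\Phi(I)=MIN$ and $\Psi(J)=NJM$, and using the defining relation (read degreewise, i.e.\ $(IRI)_t=I_t$) to check that these inclusion-preserving maps are mutually inverse---is precisely the ``similar techniques'' the paper alludes to: the paper's entire proof is a citation of \cite[Proposition 3.5]{GarciSimon} together with the assertion that the argument adapts to graded rings. Granting your standing assumption of a graded context, your computations correctly carry out that adaptation.

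But that standing assumption is a genuine gap, not bookkeeping. You begin by fixing a \emph{graded} surjective Morita context (graded bimodules $N$, $M$, homogeneous pairings), whereas the hypothesis of the proposition---and the only thing available in its application, Theorem~\ref{cofinMI}, where the Leavitt path algebras are Morita equivalent merely as idempotent rings---is an \emph{ungraded} Morita equivalence between rings that each happen to carry a $G$-grading. From that hypothesis no graded context can be extracted; with an ungraded context, $MIN$ need not be a graded ideal, so $\Phi$ does not even land in $\mathcal{L}_{gr}(S)$. Moreover, no argument can close this gap, because under the literal hypothesis the statement is false. Let $K$ be a field with $\mathrm{char}\,K\neq 2$, let $G=\mathbb{Z}/2\mathbb{Z}$, and let $R=S=K\times K$ as rings, with $R$ graded trivially and $S$ graded by $S_0=K(1,1)$, $S_1=K(1,-1)$. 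Then $R$ and $S$ are isomorphic rings, hence Morita equivalent idempotent rings; every nonzero homogeneous element of $S$ is invertible, so $\mathcal{L}_{gr}(S)=\{0,S\}$, while $\mathcal{L}_{gr}(R)$ contains the four idempotent ideals $0$, $K\times 0$, $0\times K$, $R$ (here I read the relation as $(IRI)_t=I_t$, the reading under which your transport argument operates; under the stricter componentwise reading the two lattices have one and two elements respectively). The lattices are not isomorphic. So any correct proof must take as input a grading-compatible equivalence---the graded context you posit---and the passage from the proposition's stated hypothesis to that input is not a detail you left for later: it is the entire issue, and it is one the paper itself, which offers no proof beyond the citation, also leaves unaddressed.
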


\begin{theorem}\label{cofinMI} Cofinality is a Morita invariant property.
\end{theorem}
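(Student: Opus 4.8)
The plan is to reduce cofinality to a lattice-theoretic statement that is manifestly preserved under Morita equivalence, rather than arguing directly about hereditary saturated subsets of the graph. By Remark \ref{cofin}, a graph $E$ is cofinal if and only if the Leavitt path algebra $L_K(E)$ is graded simple, i.e. it has no proper nonzero graded ideals. So the whole statement will follow once I know that graded simplicity (in the appropriate sense) is a Morita invariant property for $\mathbb{Z}$-graded idempotent rings. Recall that $L_K(E)$ is $\mathbb{Z}$-graded, idempotent (it has local units), and that Morita equivalence of graphs is defined precisely as Morita equivalence of the associated Leavitt path algebras.

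The key tool is Proposition \ref{isoRet}: for an abelian group $G$ and two $G$-graded idempotent rings $R$ and $S$ that are Morita equivalent, the lattices $\mathcal{L}_{gr}(R)$ and $\mathcal{L}_{gr}(S)$ of graded ideals $I$ satisfying $I_gR_hI_k=I_{ghk}$ are isomorphic. I would take $G=\mathbb{Z}$, $R=L_K(E)$ and $S=L_K(F)$ for two Morita equivalent graphs $E$ and $F$. The first step is to verify that graded simplicity of $L_K(E)$ is equivalent to the lattice $\mathcal{L}_{gr}(L_K(E))$ being trivial (consisting only of $0$ and the whole ring). For Leavitt path algebras every graded ideal is generated by idempotents (vertices), so the defining condition $I_gR_hI_k=I_{ghk}$ is automatically met by every graded ideal; hence $\mathcal{L}_{gr}(L_K(E))$ coincides with the full lattice of graded ideals. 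Consequently $L_K(E)$ is graded simple exactly when this lattice has only two elements.

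With that identification in hand the argument closes quickly. Suppose $E$ is cofinal; then $L_K(E)$ is graded simple by Remark \ref{cofin}, so $\mathcal{L}_{gr}(L_K(E))=\{0,L_K(E)\}$ is a two-element lattice. By Proposition \ref{isoRet} the lattice $\mathcal{L}_{gr}(L_K(F))$ is isomorphic to it, hence also has exactly two elements, which forces $L_K(F)$ to be graded simple and therefore $F$ to be cofinal, again by Remark \ref{cofin}. Symmetry in $E$ and $F$ gives the converse, establishing that cofinality is Morita invariant.

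The main obstacle I anticipate is the verification in the second paragraph that the lattice $\mathcal{L}_{gr}$ for a Leavitt path algebra coincides with its full lattice of graded ideals, i.e. that the technical condition $I_gR_hI_k=I_{ghk}$ is harmless here; this rests on the structure of graded ideals of $L_K(E)$ as being generated by vertex idempotents and on the grading being concentrated so that products of homogeneous pieces recover the graded components. Once that bookkeeping is settled, the transfer of \emph{triviality} of the lattice across the isomorphism of Proposition \ref{isoRet} is immediate, since any lattice isomorphism preserves the number of elements and in particular sends a two-element lattice to a two-element lattice.
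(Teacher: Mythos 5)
Your proposal is correct and follows essentially the same route as the paper, whose proof is exactly the combination of Proposition \ref{isoRet}, Remark \ref{cofin}, and the $\mathbb{Z}$-grading of Leavitt path algebras. The only difference is that you spell out the verification (left implicit in the paper) that for Leavitt path algebras every graded ideal lies in the lattice $\mathcal{L}_{gr}$, since graded ideals are generated by idempotents and hence satisfy $IRI=I$; this is precisely the bookkeeping needed to pass from triviality of $\mathcal{L}_{gr}$ back to graded simplicity.
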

\begin{proof} Use Proposition \ref{isoRet} and Remark \ref{cofin} and take into account that every Leavitt path algebra is a $\mathbb Z$-graded ring.
\end{proof}

Our last aim in this section will be to prove Theorem \ref{desing}, which is the main result in \cite{AR}. Here we follow a different approach. We start by recalling the notion of desingularization.
\medskip

If $v_0$ is a sink in $E$, then by \emph{adding a tail at $v_0$} we mean attaching a graph of the form
$$\xymatrix{ {\bullet}^{v_0} \ar[r] & {\bullet}^{v_1} \ar[r] & {\bullet}^{v_2} \ar[r] &
 {\bullet}^{v_3} \ar@{.>}[r] & }$$
to $E$ at $v_0$. If $v_0$ is an infinite emitter  in $E$, then by \emph{adding a tail at $v_0$} we mean
performing the following process: we first list the edges $e_1, e_2, e_3, \ldots$ of $s^{-1}(v_0)$, then we
add a tail to $E$ at $v_0$ of the following form
$$\xymatrix{ {\bullet}^{v_0} \ar[r]^{f_1} & {\bullet}^{v_1} \ar[r]^{f_2} & {\bullet}^{v_2} \ar[r]^{f_3} &
{\bullet}^{v_3} \ar@{.>}[r] & }$$ We remove the edges in $s^{-1}(v_0)$, and for every $e_j \in s^{-1}(v_0)$
we draw an edge $g_j$ from $v_{j-1}$ to $r(e_j)$.

If $E$ is a directed graph, then a \emph{desingularization} of $E$ is a graph $F$ formed by adding a tail to
every sink and every infinite emitter of $E$ in the fashion above. Several basic examples of desingularized
graphs can be found in \cite[Examples 5.1, 5.2 and 5.3]{AA3}.

\begin{remark}\label{desME}
{\rm
If $F$ is a desingularization of an arbitrary graph $E$ then the Leavitt path algebras $L_K(E)$ and $L_K(F)$ are
Morita equivalent. This was shown in  \cite[Theorem 5.2]{AA3} for countable graphs and in  \cite[Lemma 6.7]{T}
for arbitrary graphs.
}
\end{remark}

\begin{theorem}\label{desing}
Let $E$ be a graph that contains an uncountable emitter. Then $E$ does not admit any desingularization.
\end{theorem}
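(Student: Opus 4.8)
```latex
The plan is to argue by contradiction, exploiting the fact that a desingularization produces a Morita equivalent Leavitt path algebra together with a cardinality invariant that matrix rings and local algebras at elements must preserve. Suppose $E$ contains an uncountable emitter $v_0$, so $s^{-1}(v_0)$ is uncountable, and suppose for contradiction that $F$ is a desingularization of $E$. The crucial observation is that in the desingularization process one must \emph{list} the edges of $s^{-1}(v_0)$ as $e_1, e_2, e_3, \ldots$, that is, index them by $\mathbb{N}$; this is only possible when $s^{-1}(v_0)$ is countable. So at the purely combinatorial level the construction already fails for an uncountable emitter. The work, then, is to make this failure into a rigorous algebraic statement that does not depend on the particular bookkeeping of the construction, so that no clever alternative desingularization can succeed.

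The algebraic route I would take runs through the local algebra (corner) at the vertex $v_0$. First I would identify a ring-theoretic or cardinality invariant attached to the corner $L_K(F)_{w}$ at a suitable idempotent $w$ that is forced to be countable whenever $F$ is row-finite (which a desingularization always is, since adding tails replaces each infinite emitter by a row-finite configuration), but which is uncountable in $L_K(E)_{v_0}$ because $v_0$ emits uncountably many edges. A natural candidate is the $K$-dimension of the corner $v_0 L_K(E) v_0$, or more precisely the relevant graded component: the Cuntz--Krieger relation (CK2) holds only at regular vertices, so at the infinite emitter $v_0$ one still has the uncountable family $\{e e^* \mid e \in s^{-1}(v_0)\}$ of orthogonal idempotents sitting in $v_0 L_K(E) v_0$, forcing this corner to have uncountable dimension. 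By Theorem~\ref{equivlocal} and Corollary~\ref{casoCorner}, Morita equivalence of $L_K(E)$ and $L_K(F)$ means $v_0 L_K(E) v_0 \cong \MM_n(L_K(F))_u$ for some $n$ and some idempotent $u$; since $F$ is row-finite and countable, every such corner of a matrix ring over $L_K(F)$ has countable $K$-dimension, giving the contradiction.

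The main obstacle I anticipate is pinning down precisely which invariant transfers cleanly across the local-matricial isomorphism of Theorem~\ref{equivlocal} and is simultaneously sensitive enough to distinguish the uncountable emitter. Dimension over $K$ is the most transparent candidate, but one must check that passing from $L_K(F)$ to $\MM_n(L_K(F))$ and then to a corner $\MM_n(L_K(F))_u$ preserves countability of dimension; this is routine, since matrices of finite size over a countable-dimensional algebra remain countable-dimensional and corners are subspaces. The genuinely delicate point is verifying that $v_0 L_K(F') v_0$ (for \emph{any} purported row-finite desingularization $F'$) has countable $K$-dimension, which reduces to the structural fact that a row-finite graph whose vertex set, after localizing at the relevant hereditary saturated set, is countable yields countable-dimensional corners. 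I would therefore establish first, as a lemma, that the corner of $L_K(F)$ at the (finite) set of vertices involved has countable dimension whenever $F$ is row-finite and countable, and then invoke Theorem~\ref{equivlocal} to derive the contradiction with the uncountable emitter in $E$.
```
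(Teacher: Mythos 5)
Your proposal follows the same strategy as the paper's own proof: the uncountable family $\{e e^{\ast} \mid e \in s^{-1}(v_0)\}$ of orthogonal (hence $K$-linearly independent) idempotents forces $v_0 L_K(E) v_0$ to have uncountable dimension; a desingularization $F$ would make $L_K(E)$ and $L_K(F)$ Morita equivalent, so Theorem \ref{equivlocal} (or Corollary \ref{casoCorner}) identifies $v_0 L_K(E) v_0$ with a corner $\MM_n(L_K(F))_u$; and one then derives a contradiction from a countability bound on the dimension of that corner. Up to this point your outline and the paper's proof coincide.

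There is, however, a genuine gap in the step where you bound the dimension of $\MM_n(L_K(F))_u$. Your justification is "since $F$ is row-finite \emph{and countable}, every such corner of a matrix ring over $L_K(F)$ has countable $K$-dimension", and your proposed lemma again carries the hypothesis "row-finite and countable". Countability of $F$ is not available: the theorem places no cardinality restriction on $E$ beyond the existence of one uncountable emitter, so $E^0$ (hence $F^0$) may well be uncountable --- for instance $E$ could have uncountably many sinks --- and then $L_K(F)$ itself has uncountable $K$-dimension, so the "routine" observation that corners are subspaces of a countable-dimensional algebra yields nothing. The fact that actually does the work, and that the paper invokes as \cite[Corollary 8]{AR}, is purely local: in a row-finite (indeed row-countable) graph, for any two vertices $v, w$ the space $v L_K(F) w$ has at most countable dimension, because only countably many paths issue from a fixed vertex. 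To use it one must first cut the corner down to finitely many vertices: writing $u=(u_{ij})$, choose a finite sum of vertices $\alpha$ of $F$ with $u_{ij} \in \alpha L_K(F) \alpha$ for all $i,j$; then $\MM_n(L_K(F))_u = u\,\MM_n(\alpha L_K(F)\alpha)\,u$, whose dimension is at most $n^2 \dim_K \bigl(\alpha L_K(F)\alpha\bigr)$, which is countable by the local fact. Your closing remark about "localizing at the relevant hereditary saturated set" gestures at exactly this repair, but as written your lemma's hypothesis cannot be verified in the situation at hand, so the contradiction is not reached without replacing global countability of $F$ by the local, row-finiteness-only statement.
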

\begin{proof}
 Suppose that $F$ is a desingularization of a graph $E$ and assume that there is an infinite emitter $u \in E^0$ which emits an uncountable amount of edges. Let $s^{-1}(u) =\{e_\alpha\}$, which is an uncountable set. This implies that the set $X:=\{e_\alpha e_\alpha^*\}$, consisting of $K$-linearly independent elements, is also uncountable. 

Consider the corner $L_K(E)_u$. By Remark \ref{desME} and Theorem \ref{equivlocal} there exist a natural number $n\in \mathbb N$ and an idempotent $b\in \MM_n(L_K(E))$ such that the algebras $uL_K(E)u$ and $\MM_n(L_K(F))_b$ are isomorphic. In particular, they have the same dimension as $K$-vector spaces. 

Now, given $b=(b_{ij})$, let $\alpha=v_1+ \dots +v_n$, with $v_i\in E^0$, be such that $b_{ij}\in \alpha L_K(F) \alpha$ and denote by $a=diag(\alpha, \dotsc, \alpha)\in \MM_n(L_K(E))$. Then 
$\MM_n (L_K(F))_b= b\MM_n (L_K(F))b = ba\MM_n (L_K(F))ab = b\MM_n (\alpha L_K(F)\alpha)b$. By \cite[Corollary 8]{AR}, the dimension of $v_iL_K(E)v_j$ is at most countable, for any $i, j \in \{1, \dots, n\}$, hence the dimension of $\alpha L_K(F)\alpha$ is at most countable and so is the dimension of $b\MM_n (\alpha L_K(F)\alpha)b$. But this algebra is isomorphic to $uL_K(E)u$, which contains the linearly independent and uncountable set $X$. This is a contradiction and therefore our result has been proved.
\end{proof}

\section*{Acknowledgments}
The authors have been partially supported by the Spanish MEC and Fondos FEDER through project MTM2010-15223,  by the Junta de Andaluc\'{\i}a and Fondos FEDER, jointly, through projects FQM-336 and FQM-3737. The second author has been partially supported also by the  Programa de Becas para Estudios Doctorales y Postdoctorales SENACYT-IFARHU, contrato no. 270-2008-407,  Gobierno de Panam\'a and by the University of Panam\'a. This work was done
during a research stay of the second author at the University of M\'alaga. He would like to thank the host center for its hospitality and support.

\bibliographystyle{amsplain}

\end{document}